\documentclass[reqno,11pt,twoside]{amsart}

\usepackage{xypic}
\usepackage{graphics,amssymb,comment}
\usepackage{latexsym}
\usepackage{mathrsfs}
\usepackage{chngcntr,color}
\usepackage{hyperref}
\hypersetup{ 
colorlinks,
citecolor=black,
filecolor=green,
linkcolor=black,
urlcolor=green
} 
\xyoption{curve}

\newtheorem{lemma}{Lemma}[section]
\newtheorem{proposition}[lemma]{Proposition}
\newtheorem{theorem}[lemma]{Theorem}
\newtheorem{corollary}[lemma]{Corollary}

\theoremstyle{definition}

\newtheorem{definition}[lemma]{Definition}

\newtheorem{question}[lemma]{Question}

\newcommand{\RHom}{\mathrm{RHom}}
\newcommand{\DPic}{\mathrm{DPic}}
\newcommand{\End}{\mathrm{End}}
\newcommand{\Pic}{\mathrm{Pic}}

\newcommand{\Aut}{\mathrm{Aut}}
\newcommand{\Spec}{\mathrm{Spec}}
\newcommand{\Out}{\mathrm{Out}}
\newcommand{\ox}{\otimes}
\renewcommand{\L}{\mathrm{L}}

\bibliographystyle{plain}

\title[Derived Picard group of an affine Azumaya algebra]{The derived Picard group of an affine Azumaya algebra}
\date{\today}
\author{Cris Negron}
\address{Department of Mathematics\\Louisiana State University\\
Baton Rouge, LA 70803, USA}
\email{cnegron@lsu.edu}
\thanks{This work was supported by NSF Postdoctoral Research Fellowship DMS-1503147}

\begin{document}
\maketitle

\begin{abstract}
We describe the derived Picard group of an Azumaya algebra $A$ on an affine scheme $X$ in terms of global sections of the constant sheaf of integers on $X$, the Picard group of $X$, and the stabilizer of the Brauer class of $A$ under the action of $\Aut(X)$.  In particular, we find that the derived Picard group of an Azumaya algebra is generally not isomorphic to that of the underlying scheme.  In the case of the trivial Azumaya algebra, our result refines Yekutieli's description of the derived Picard group of a commutative algebra.  We also get, as a corollary, an alternate proof of a result of Antieau which relates derived equivalences to Brauer equivalences for affine Azumaya algebras.  The example of a Weyl algebra in finite characteristic is examined in some detail.
\end{abstract}

\section{Introduction}

We work over a commutative base ring $k$ over which all algebras are assumed to be flat.
\par

In this paper we provide a description of the derived Picard group of an affine Azumaya algebra.  The derived Picard group $\DPic(A)$ of an algebra $A$ is the group of isoclasses of {\it tilting complexes} in $D^b(A\ox_k A^{op})$ (see~\ref{sect:dpg}).  The derived Picard group is known to be an invariant of the derived category of $A$ when $k$ is a field and, as is explained below, it is strongly related to the Hochschild cohomology of $A$.  An Azumaya algebra can be seen as a type of globalization of a central simple algebra (see~\ref{sect:Az}).  A Weyl algebra in finite characteristic, for example, is Azumaya over its center~\cite{revoy73}.

\begin{theorem}[\ref{thm:dpicaz}]\label{thm:} 
Let $R$ be a commutative algebra, $X=\Spec(R)$, and $A$ be an Azumaya algebra over $R$.  Then there is a group isomorphism
\[
\DPic(A)\cong \left(\Gamma(X,\underline{\mathbb{Z}})\times \Pic(X)\right)\rtimes_{\alpha}\Aut(X)_{[A]}.
\]
\end{theorem}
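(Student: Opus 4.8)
The plan is to separate the computation into determining \emph{which} tilting complexes occur and an explicit description of the invertible bimodules. Writing $A^e=A\ox_k A^{op}$, an element of $\DPic(A)$ is a two-sided tilting complex $T\in D^b(A^e)$, and the crux is to show that every such $T$ is isomorphic in $D^b(A^e)$ to a shift $P[n]$ of an ordinary invertible $A$-bimodule $P$, where $n\in\Gamma(X,\underline{\mathbb Z})$ is a locally constant shift (the shift may vary over the connected components of $X$, since $A$ decomposes accordingly). Granting this reduction, $\DPic(A)$ is generated by the shift subgroup $\Gamma(X,\underline{\mathbb Z})$ together with the ordinary Picard group $\Pic(A)$, and the whole group is pinned down once I compute $\Pic(A)$ and record how the pieces interact.

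First I would compute $\Pic(A)$. The center $Z(A)=R$ acts on any invertible $A$-bimodule $M$ from both sides, and invertibility forces a single automorphism $\sigma_M$ with $m\cdot r=\sigma_M(r)\cdot m$; the assignment $M\mapsto\sigma_M$ is a homomorphism $\Pic(A)\to\Aut(R)\cong\Aut(X)$. Its kernel consists of the $R$-central invertible bimodules, i.e.\ the invertible modules over $A\ox_R A^{op}$. Using the defining isomorphism $A\ox_R A^{op}\xrightarrow{\sim}\End_R(A)$ of an Azumaya algebra, together with the fact that $A$ is an $R$-progenerator, this algebra is Morita equivalent to $R$, so the kernel is $\Pic(R)=\Pic(X)$. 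For the image, an automorphism $\sigma$ lifts to an invertible bimodule exactly when $A$ and the scalar restriction $\sigma^\ast A$ are Morita equivalent over $R$, that is, when $[\sigma^\ast A]=[A]$ in the Brauer group $\mathrm{Br}(R)$; hence the image is precisely the stabilizer $\Aut(X)_{[A]}$. This yields the short exact sequence
\[
1\longrightarrow \Pic(X)\longrightarrow \Pic(A)\longrightarrow \Aut(X)_{[A]}\longrightarrow 1,
\]
which I would split by choosing, for each $\sigma\in\Aut(X)_{[A]}$, a Morita bimodule realizing $A\simeq\sigma^\ast A$, giving $\Pic(A)\cong\Pic(X)\rtimes\Aut(X)_{[A]}$.

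The main obstacle is the claim that there are no exotic tilting complexes. I would prove this through the center together with Yekutieli's description in the commutative case. A two-sided tilting complex $T$ induces an autoequivalence $-\ox^{\L}_A T$ of $D^b(A)$, hence an automorphism of $\mathrm{HH}^0(A)=Z(A)=R$, giving an element of $\Aut(X)$ that, being induced by a self-equivalence of $A$, fixes $[A]$. After composing with the inverse of a bimodule realizing this automorphism (available from the previous paragraph), I may assume $T$ induces the identity on the center, so that $-\ox^{\L}_A T$ is an $R$-linear autoequivalence of $D^b(A)$. Separability of $A$ over $R$ identifies its Hochschild cohomology with that of $R$, and the Azumaya condition makes $A$ \'etale-locally a matrix algebra over $R$; this is what lets me transport the question to the commutative ring $R$, where Yekutieli's theorem says every $R$-linear tilting complex is a shift of a line bundle. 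Descending, $T$ is forced to be a shift of an invertible bimodule. The delicate point is controlling the $R$-linear derived autoequivalences of $A$ faithfully enough to invoke the commutative result, and it is here that the separable/Azumaya structure does the real work.

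Finally I would assemble the semidirect product. The shifts $\Gamma(X,\underline{\mathbb Z})$ commute with the line-bundle part $\Pic(X)$ (line bundles are $R$-central), so together they form a normal abelian subgroup $\Gamma(X,\underline{\mathbb Z})\times\Pic(X)$, on which the complementary automorphisms act: an element of $\Aut(X)_{[A]}$ permutes the connected components of $X$, thereby acting on $\Gamma(X,\underline{\mathbb Z})$, and pulls back line bundles, thereby acting on $\Pic(X)$. This is the action $\alpha$, and it produces the claimed isomorphism
\[
\DPic(A)\cong\bigl(\Gamma(X,\underline{\mathbb Z})\times\Pic(X)\bigr)\rtimes_{\alpha}\Aut(X)_{[A]}.
\]
As a byproduct, since the derived equivalences between two Azumaya algebras form a torsor under $\DPic$, the identification of the automorphism part with $\Aut(X)_{[A]}$ recovers Antieau's criterion that two affine Azumaya algebras are derived equivalent precisely when their Brauer classes agree up to an automorphism of $X$.
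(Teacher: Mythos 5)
Your skeleton matches the paper's in outline (reduce tilting complexes to shifts of invertible bimodules, identify the $R$-central part with $\Gamma(X,\underline{\mathbb{Z}})\times\Pic(X)$ via the Morita equivalence between $R$ and $A\ox_R A^{op}$, identify the quotient with $\Aut(X)_{[A]}$ via Schack's bimodule description of Brauer equivalence), but there are two genuine gaps. The most serious one is your splitting claim, which also misreads what $\alpha$ is. Choosing, for each $\sigma\in\Aut(X)_{[A]}$, an invertible bimodule $M_\sigma$ realizing the Brauer equivalence between $\sigma_\ast A$ and $A$ gives only a \emph{set-theoretic} section of $\Pic(A)\to\Aut(X)_{[A]}$: there is no reason that $M_\sigma\ox_A M_\tau\cong M_{\sigma\tau}$. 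Both bimodules induce the same automorphism of the center, so they differ by an element of $\Pic(X)$, and the discrepancy $(\sigma,\tau)\mapsto [M_\sigma\ox_A M_\tau\ox_A M_{\sigma\tau}^\vee]$ is precisely the $\Pic(X)$-valued $2$-cocycle $\alpha$ in the statement. The symbol $\rtimes_\alpha$ denotes the extension determined by the pushforward action \emph{together with this cocycle}, not a split semidirect product; you instead interpreted $\alpha$ as the action and concluded splitness. This is not a cosmetic point: the paper proves only the twisted statement, knows splitness only in the case $A=R$ (where $\phi\mapsto{}_{\bar{\phi}}R$ is genuinely multiplicative), and explicitly poses the possible nonvanishing of $\alpha$ as an open question. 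Your assembly step therefore asserts a strictly stronger statement than the theorem, by a step that does not hold up.

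The second gap is in your reduction of arbitrary tilting complexes to shifts of invertible bimodules. You propose to use that $A$ is \'etale-locally a matrix algebra and to ``descend'' Yekutieli's commutative classification, but you yourself flag the control of $R$-linear derived autoequivalences as ``the delicate point'' without resolving it --- and that point is the entire content of the reduction, since \'etale descent for tilting complexes is not available off the shelf. The paper's route avoids descent altogether: it localizes at each prime $p$ in the Zariski topology, observes that $A_p$ is Azumaya over the local ring $R_p$ and hence has a unique maximal two-sided ideal, and applies Yekutieli's theorem on tilting complexes over (noncommutative) local rings to conclude that $T_p$ is a shift of an invertible bimodule; a gluing argument --- the cohomology of $T$ is finitely generated projective over $R$ and the supports of its graded pieces are disjoint clopen subsets of $X$ --- then produces the locally constant shift $n\in\Gamma(X,\underline{\mathbb{Z}})$ and the global invertible bimodule $M$. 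If you want to salvage your write-up, replace the \'etale-descent sketch with this Zariski-local argument, and weaken your final assembly to the statement actually claimed: the extension $1\to\Gamma(X,\underline{\mathbb{Z}})\times\Pic(X)\to\DPic(A)\to\Aut(X)_{[A]}\to 1$ is classified by a cocycle that can be taken $\Pic(X)$-valued because every class in $\Aut(X)_{[A]}$ is represented by an honest (unshifted) invertible bimodule.
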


Here $\underline{\mathbb{Z}}$ is the constant sheaf of the integers on $X$, and $\Aut(X)_{[A]}$ is the stabilizer of the Brauer class of $A$ under the action of $\Aut(X)=\Aut_{k\text{-}\mathrm{schemes}}(X)$ on the Brauer group $\mathrm{Br}(X)$ by pushforward.  The element $\alpha$ is a $\Pic(X)$-valued $2$-cocycle
\[
\alpha:\Aut(X)_{[A]}\times \Aut(X)_{[A]}\to \Pic(X).
\]
The particular form of Theorem~\ref{thm:} for central simple algebras is given in Section~\ref{sect:CSA}.  In the case in which $A=R$ Theorem~\ref{thm:} refines a well established result of Yekutieli and Rouquier-Zimmerman~\cite{yekutieli99,rouquierzimmerman03,yekutieli15} (see also~\cite{yekutieli10}).

\begin{corollary}[\ref{cor:dpiccom}]
For any commutative algebra $R$, and $X=\Spec(R)$, there is a group isomorphism
\[
\DPic(R)\cong \left(\Gamma(X,\underline{\mathbb{Z}})\times\Pic(X)\right)\rtimes\Aut(X).
\]
\end{corollary}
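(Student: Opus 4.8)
The plan is to obtain the corollary as the special case $A=R$ of Theorem~\ref{thm:dpicaz}. Two simplifications occur relative to the general statement, while the ``inner'' factor $\Gamma(X,\underline{\mathbb{Z}})\times\Pic(X)$ (shifts on connected components together with line bundles) transfers verbatim. First, the trivial Azumaya algebra $R$ has trivial Brauer class, $[R]=0\in\mathrm{Br}(X)$, and the pushforward action of any $\sigma\in\Aut(X)$ fixes the trivial class; hence the stabilizer $\Aut(X)_{[R]}$ is all of $\Aut(X)$. Second, the twisting cocycle $\alpha$ becomes trivial, so that $\rtimes_\alpha$ collapses to the ordinary semidirect product $\rtimes$. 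Granting these two points, substituting $\Aut(X)_{[R]}=\Aut(X)$ and $\alpha=0$ into the conclusion of Theorem~\ref{thm:dpicaz} yields the displayed isomorphism.

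To see that $\alpha$ is trivial I would exhibit an honest group-theoretic splitting of the projection to $\Aut(X)$. For $\sigma\in\Aut(X)$, corresponding to a $k$-algebra automorphism of $R$, let ${}_1R_\sigma$ denote the $R$-bimodule that is $R$ as a left module and carries the right action twisted by $\sigma$. Each ${}_1R_\sigma$ is an invertible bimodule, hence a tilting complex, and the assignment $\sigma\mapsto[{}_1R_\sigma]$ is the set-theoretic section used to build $\alpha$ in the proof of Theorem~\ref{thm:dpicaz}. The key computation is the canonical bimodule isomorphism
\[
{}_1R_\sigma\otimes_R{}_1R_\tau\;\cong\;{}_1R_{\sigma\tau}
\]
(with whatever composition convention the theorem fixes), which holds with no line-bundle correction precisely because $R$ is commutative. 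This canonical, strictly associative system of isomorphisms makes $\sigma\mapsto[{}_1R_\sigma]$ a genuine homomorphism $\Aut(X)\to\DPic(R)$, i.e.\ a splitting, whence the $\Pic(X)$-valued obstruction $\alpha$ vanishes.

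The main obstacle is essentially bookkeeping rather than conceptual: one must trace the definition of $\alpha$ from the proof of Theorem~\ref{thm:dpicaz} and confirm that the canonical isomorphisms above are exactly the ones whose associativity defect defines $\alpha$, so that their strict compatibility forces $\alpha=0$ on the nose rather than merely up to a coboundary. Once this identification of sections is made, both simplifications are immediate and the corollary follows.
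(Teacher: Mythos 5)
Your proposal is correct and follows essentially the same route as the paper: the paper likewise deduces the corollary from Theorem~\ref{thm:dpicaz} by noting that $\Aut(X)_{[R]}=\Aut(X)$ and exhibiting the canonical group-theoretic section $\phi\mapsto{}_{\bar{\phi}}R$ of $\Phi$ (your ${}_1R_\sigma$, up to the left/right twisting convention), which splits the extension. Your final worry is unnecessary: a splitting of the extension is exactly equivalent to the extension being an ordinary semidirect product, so one never needs $\alpha$ to vanish ``on the nose'' rather than merely up to coboundary.
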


We recall again that $\Aut(X)=\Aut_{k\text{-schemes}}(X)$.  In~\cite{yekutieli99,yekutieli15} the above result is given for algebras $R$ with a finite decomposition $R=\prod_{i=1}^n R_i$ into indecomposable algebras $R_i$ (e.g. take $R$ to be Noetherian).  As a corollary to Proposition~\ref{prop:deq_as_meq} below, we also get an alternate proof of a result of Antieau which relates derived equivalences to Brauer equivalences (see Corollary~\ref{cor:antieau}).
\par

We consider, as an example, the Weyl algebra $A_n(k)$ in finite characteristic, which is Azumaya over a polynomial ring $Z_n(k)$ in $2n$ variables~\cite{revoy73}.  We have $\mathbb{A}^{2n}_k=\Spec(Z_n(k))$.

\begin{theorem}[\ref{thm:az}--\ref{cor:az}]\label{thm::}
Suppose $k$ is a field of finite characteristic other than $2$.  For the Weyl algebra $A_n(k)$ we have the following information:
\begin{enumerate}
\item $\DPic(A_n(k))=\mathbb{Z}\times \Aut(\mathbb{A}^{2n}_k)_{[A_n(k)]}$.
\item There is a canonical group embedding $\DPic(A_n(k))\to \DPic(Z_n(k))$.
\item The stabilizer of the Brauer class of $A_n(k)$ is a proper subgroup in $\Aut(\mathbb{A}^{2n}_k)$ which is of infinite index when $k$ is an infinite field.
\item The embedding of {\rm (2)} is \emph{not} an isomorphism.
\end{enumerate}
\end{theorem}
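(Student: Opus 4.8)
The plan is to extract all four assertions from the general computation of Theorem~\ref{thm:dpicaz} (and its commutative specialization Corollary~\ref{cor:dpiccom}), after a separate analysis of the Brauer class of the Weyl algebra; throughout I write $X=\mathbb{A}^{2n}_k=\Spec(Z_n(k))$. For (1) and (2) I would simply evaluate the two isomorphisms on $X$. As $X$ is connected, $\Gamma(X,\underline{\mathbb{Z}})=\mathbb{Z}$; as $Z_n(k)$ is a polynomial ring, hence a UFD, $\Pic(X)=0$. Thus the $\Pic(X)$-valued cocycle $\alpha$ vanishes, and the action of $\Aut(X)_{[A_n(k)]}$ on $\Gamma(X,\underline{\mathbb{Z}})=\mathbb{Z}$ is trivial since every automorphism fixes the global constant function $1$. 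Theorem~\ref{thm:dpicaz} therefore degenerates to the direct product $\DPic(A_n(k))\cong\mathbb{Z}\times\Aut(X)_{[A_n(k)]}$, which is (1); the same specialization of Corollary~\ref{cor:dpiccom} gives $\DPic(Z_n(k))\cong\mathbb{Z}\times\Aut(X)$, the full automorphism group appearing because the trivial Brauer class is fixed by all of $\Aut(X)$. The embedding of (2) is then $\mathrm{id}_{\mathbb{Z}}\times\iota$, where $\iota$ is the inclusion of the stabilizer; it is a homomorphism because both sides are genuine direct products and is injective because $\iota$ is. I would record that this is the \emph{canonical} map, sending a two-sided tilting complex to the pair consisting of its shift and the automorphism it induces on $Z(A_n(k))=Z_n(k)$.

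Next I would prove that the stabilizer is proper, and this is exactly where $\mathrm{char}(k)\ne 2$ enters. The Weyl algebra carries its principal anti-automorphism $x_i\mapsto x_i,\ \partial_i\mapsto-\partial_i$, which is the same datum as a $Z_n(k)$-semilinear isomorphism $A_n(k)\xrightarrow{\ \sim\ }A_n(k)^{op}$ covering the linear involution $\tau$ of $X$ with $u_i\mapsto u_i$ and $v_i\mapsto -v_i$ (here $v_i=\partial_i^{\,p}$, so $(-\partial_i)^p=-v_i$ as $p$ is odd). Hence $\tau^*[A_n(k)]=[A_n(k)^{op}]=-[A_n(k)]$. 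Since $A_n(k)$ has degree $p^n$, its class has order a power of $p$; recording separately that this class is nontrivial (e.g.\ from~\cite{revoy73}) and using that $p$ is odd, the order is $>2$, so $-[A_n(k)]\ne[A_n(k)]$ and $\tau\notin\Aut(X)_{[A_n(k)]}$.

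The remaining, and hardest, point is the infinite-index claim. I would test the stabilizer against the one-parameter family of scalings $\sigma_\lambda\in\Aut(X)$ with $u_1\mapsto\lambda u_1$ (all other coordinates fixed), $\lambda\in k^\times$: if $\sigma_\lambda$ lies in the stabilizer only for $\lambda=1$, then this copy of $k^\times$ meets the stabilizer trivially and, for infinite $k$, the index is infinite. Controlling the action of these scalings on $[A_n(k)]$ is the main obstacle. The cleanest route I foresee is to identify $[A_n(k)]$, inside a concrete $k$-linear model of the $p$-torsion $\mathrm{Br}(X)[p]$ coming from Artin--Schreier theory (equivalently from the Frobenius-split, Bezrukavnikov--Mirkovi\'c--Rumynin description of the ring of differential operators as an Azumaya algebra on the twisted cotangent bundle), with the class attached to the canonical symplectic form $\omega=\sum_i du_i\wedge dv_i$. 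Under such a model $\Aut(X)$ acts by pullback of forms, whence $\sigma_\lambda^*$ multiplies the class by $\lambda$; as the ambient group is a $k$-vector space and $[A_n(k)]\ne 0$, the classes $\lambda\,[A_n(k)]$ are pairwise distinct and the orbit is infinite. In practice I would either verify this identification directly, or---more cheaply---just establish that scaling acts nontrivially by computing an explicit Artin--Schreier symbol for $A_n(k)\ox_{Z_n(k)}k(X)$ and invoking the injection $\mathrm{Br}(X)\hookrightarrow\mathrm{Br}(k(X))$ valid for the regular scheme $X$.

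Finally, (4) is immediate from the preceding: by (1)--(2) the embedding is the inclusion $\mathbb{Z}\times\Aut(X)_{[A_n(k)]}\hookrightarrow\mathbb{Z}\times\Aut(X)$, an isomorphism if and only if the stabilizer is all of $\Aut(X)$; part (3) shows the stabilizer is proper, so the map is not surjective.
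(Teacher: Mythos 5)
Parts (1), (2), and (4) of your proposal follow the paper exactly: specialize Theorem~\ref{thm:dpicaz} and Corollary~\ref{cor:dpiccom} using that $\mathbb{A}^{2n}_k$ is connected with trivial Picard group, so the cocycle and the conjugation action both degenerate, and (4) reduces to properness of the stabilizer. Your properness argument in (3) is genuinely different from the paper's, and it works: the paper gets properness from the scaling family $\omega(c)$ of Definition~\ref{def:az}, which requires the explicit isomorphisms of Lemmas~\ref{lem:An2} and~\ref{lem:An3}, whereas you use only the transpose anti-automorphism (which the paper meets in passing inside the proof of Lemma~\ref{lem:An3}, as the isomorphism $\omega(-1)_\ast A_n(k)\cong A_n(k)^{op}$), the fact that the period of an Azumaya algebra divides its degree, and nontriviality of $[A_n(k)]$. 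That is a cheaper route to properness. One repair: Revoy proves Azumaya-ness, not nontriviality of the Brauer class; for the latter you should use the argument of Lemma~\ref{lem:An0} ($A_n(k)$ is a domain, while a split Azumaya algebra of rank $>1$ acquires zero divisors after localizing).

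The genuine gap is in the infinite-index claim, which is where all the real work in the paper lies. Two problems. First, a concrete error: your $\sigma_\lambda$ scales only $u_1$, so $\sigma_\lambda^\ast\left(\sum_i du_i\wedge dv_i\right)=\lambda\, du_1\wedge dv_1+\sum_{i\geq 2}du_i\wedge dv_i$, which is not $\lambda\cdot\sum_i du_i\wedge dv_i$ once $n\geq 2$; the assertion that ``$\sigma_\lambda^\ast$ multiplies the class by $\lambda$'' fails as stated (you would want the uniform scaling, as in Definition~\ref{def:az}). Second, and decisively, the argument hinges on an unproven identification: that $[A_n(k)]$ is the class of the symplectic form in a $k$-\emph{linear} model of $\mathrm{Br}(\mathbb{A}^{2n}_k)[p]$ on which $\Aut(\mathbb{A}^{2n}_k)$ acts by pullback. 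The $p$-torsion of the Brauer group is a priori only an $\mathbb{F}_p$-vector space, and in the standard differential-form presentations (quotients of $\Omega^1$ by exact and logarithmic forms) the subgroups being quotiented out are not $k$-subspaces, so a $k$-linear structure does not descend for free. This identification is exactly the content the paper establishes by hand: Lemmas~\ref{lem:An2}--\ref{lem:An3} prove, via the change of generators $\zeta_i,\alpha_i$, that $c\mapsto[\omega(c)_\ast A_n(k)]$ is an additive map $k\to\mathrm{Br}(\mathbb{A}^{2n}_k)$, injectivity then follows from Lemma~\ref{lem:An0}, and Proposition~\ref{prop:az} converts this into an injection $k^\times\to\Aut(\mathbb{A}^{2n}_k)/\Aut(\mathbb{A}^{2n}_k)_{[A_n(k)]}$. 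Your fallback (Artin--Schreier symbols over $k(X)$ plus $\mathrm{Br}(X)\hookrightarrow\mathrm{Br}(k(X))$ for regular $X$) can be completed: from $(x_i\partial_i)^p-x_i\partial_i=x_i^p\partial_i^p$ one gets $A_n(k)\ox_{Z_n(k)}k(X)\cong[u_1v_1,v_1)\ox_{k(X)}\cdots\ox_{k(X)}[u_nv_n,v_n)$, and scalings act visibly on symbols; but you must then still show that the difference symbols $[(\lambda-\lambda')u_1v_1,v_1)$ are nonsplit over $k(X)$ --- e.g.\ as generic fibers of Weyl-type domains, hence division algebras --- and this residual nontriviality, nowhere addressed in your sketch, is precisely the kind of statement Lemmas~\ref{lem:An0} and~\ref{lem:An3} exist to prove. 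As written, the infinite-index part of (3) is a program rather than a proof.
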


We expect that the index of the stabilizer is still infinite even when $k$ is finite.  In order to prove Theorem~\ref{thm::} we provide an analysis of the behavior of the Brauer class of $A_n(k)$ which may be of independent interest.
\par

The present study is motivated by some recent results on derived invariants for Azumaya algebras, most notably the work of Tabuada and Van den Bergh~\cite{tabuadavandenbergh15}.  In~\cite{tabuadavandenbergh15} the authors show that, under a certain restriction on the characteristic, for any {\it additive invariant} $E$, and Azumaya algebra $A$ with center $R$, there will be a canonical isomorphism $E(A)\cong E(R)$.\footnote{The results of~\cite{tabuadavandenbergh15} hold in the much broader setting of sheaves of Azumaya algebras $\mathscr{A}$ on quasi-compact quasi-separated schemes $X$ with a finite number of connected components.  The characteristic restriction is as follows: for a $K$-linear additive invariant $E$, with $K$ a commutative ring, the product $r$ of the ranks of $\mathscr{A}$ on the components of $X$ must be a unit in $K$.}  The work of~\cite{tabuadavandenbergh15} comes after related results of Corti\~{n}as and Weibel on Hochschild and cyclic homology, and Hazrat, Hoobler, and Millar on algebraic $K$-theory~\cite{cortinasweibel94,hazratmillar10,hazrathoobler13}.  
\par

An emerging principle seems to be that derived invariants for $A$ and $R$ will be very strongly related, and often isomorphic.  The description of the derived Picard group in Theorem~\ref{thm:} serves to illuminate the boundaries of this principle ({\it cf.} the description of cyclic homology in~\cite{cortinasweibel94} and $K$-theory in~\cite{hazratmillar10,hazrathoobler13}).
\par

We are also motivated by the relationship between the derived Picard group and Hochschild cohomology established by Keller in~\cite{keller04}.  When $k$ is a field, what we have described above is the group of $k$-points for a certain group valued functor
\[
\DPic^A:\mathrm{comm.\ dg\ algebras}\to \mathrm{groups}.
\]
The Hochschild cohomology is the graded tangent space of this group valued functor at the identity, i.e. the collective kernels of the maps $\DPic^A(k[\epsilon]/(\epsilon^2))\to\DPic^A(k)$ where the degree of $\epsilon$ varies (see \cite[\S 4]{keller04}).  Although it has been suggested (in personal communications) that the Hochschild cohomologies for $A$ and $R$ should agree, we are unsure of what to expect from this invariant at the moment.  Our description of $\DPic(A)=\DPic^A(k)$ above tells us that the ambient groups will in fact differ in general.

\subsection{Organization of the paper}

Section~\ref{sect:bg} is dedicated to background material.  In Section~\ref{sect:deq} we analyze tilting complexes between Azumaya algebras and in Section~\ref{sect:brauer} we use our findings to relate derived equivalences to $\Aut(X)$-orbits in the Brauer group.  In Section~\ref{sect:dpicA} we prove Theorem~\ref{thm:}.  Section~\ref{sect:Weyl} is dedicated to an analysis of the Weyl algebra in finite characteristic.


\section{Background}
\label{sect:bg}

\subsection{Notations}

Throughout $k$ will be a commutative base ring.  The shift $\Sigma M$ of a complex $M$ is the complex with $(\Sigma M)^i=M^{i+1}$ and negated differential.  By an algebra we mean a $k$-algebra, and $\ox=\ox_k$.  By an unadorned $\Aut$ we mean either $\Aut_{k\text{-}\mathrm{alg}}$ or $\Aut_{k\text{-}\mathrm{schemes}}$.
\par

Given an algebra $A$ we let $D^b(A)$ denote the bounded derived category of $A$.  A perfect complex is an object in $D^b(A)$ which is isomorphic to a bounded complex of finitely generated projective $A$-modules.

\subsection{Bimodules}

Let $A$ and $B$ be algebras.  A $B\ox A^{op}$-module $M$ is said to be an {\it invertible bimodule} if there is a $A\ox B^{op}$-module $M^\vee$ so that $M\ox_A M^\vee\cong B$ and $M^\vee\ox_B M\cong A$ in the categories of $B$-bimodules and $A$-bimodules respectively.
\par

For a commutative algebra $R$, a $R$-bimodule $M$ is said to be a {\it symmetric bimodule} if for each $m\in M$ and $r\in R$ we have $mr=rm$.  If $A$ and $B$ are $R$-algebras and $M$ is a $B\ox A^{op}$-module, we say $M$ is {\it $R$-central} if its restriction to $R\ox R^{op}$ is symmetric.   This is the same thing as being a $B\ox_R A^{op}$-module.  All bimodules are assumed to be $k$-symmetric.

\subsection{The derived Picard group}
\label{sect:dpg}

The derived Picard group $\DPic(A)$ of an algebra $A$ was introduced independently by Yekutieli and Rouquier-Zimmermann~\cite{yekutieli99,rouquierzimmerman03}.  This group can be defined succinctly as the collection of isoclassses of invertible objects in the monoidal category $(D^b(A\ox A^{op}),\ox^{\L}_A)$.  Via Rickard's study of derived equivalences~\cite{rickard91}, one finds that the derived Picard group is a derived invariant in the sense that any $k$-linear triangulated equivalence $D^b(A)\overset{\sim}\to D^b(B)$ implies an isomorphism of groups $\DPic(A)\cong\DPic(B)$, provided both $A$ and $B$ are projective over $k$.
\par

We let $\DPic(A,B)$ denote the collection of isoclasses of objects $T$ in $D^b(B\ox A^{op})$ which admits a object $T^\vee$ in $D^b(A\ox B^{op})$ such that $T\ox_A^\L T^\vee\cong B$ and $T^\vee\ox_B^\L T\cong A$ in their respective derived categories of bimodules.  Such an object $T$ will be called a {\it tilting complex} and $T^\vee$ will be called its dual, or quasi-inverse.\footnote{These will be tilting complexes in the usual sense of Rickard~\cite[Theorem 1.6]{yekutieli99}, \cite{rickard91}.}
\par

Obviously $\DPic(A)=\DPic(A,A)$.  We denote the class of a tilting complex $T$ in $\DPic(A,B)$ by $[T]$, although we will often abuse language and refer to $T$ as an object in $\DPic(A,B)$.
\par

Note that for any object $T$ in $\DPic(A,B)$ we will get a triangulated equivalence $T\ox_A^\L-:D^b(A)\overset{\sim}\to D^b(B)$ with quasi-inverse $T^\vee\ox^\L_B-$.  From this one can conclude that any tilting complex is a perfect complex over $A$ and $B$ independently, and that the two dg algebra maps
\[
A^{op}\to\RHom_B(T,T)\ \ \mathrm{and}\ \ B\to \RHom_A(T,T) 
\]
given by the right and left actions will be quasi-isomorphisms.

\subsection{Azumaya algebras}
\label{sect:Az}

\begin{definition}
An Azumaya algebra over a commutative ring $R$ is an $R$-algebra $A$ which is finitely generated and projective over $R$ and is such that the action map $A\ox_R A^{op}\to \End_R(A,A)$ is an isomorphism of $R$-algebras.
\end{definition}

It is a fact that any Azumaya algebra $A$ over $R$ has $\mathrm{Center}(A)=R$~\cite[Proposition 1.1]{milne80}.  There are also many other equivalent Azumaya conditions, which can be found in~\cite[Proposition 2.1]{milne80}.  Two fundamental classes of Azumaya algebras are central simple algebras over fields, and crystalline differential operators on smooth schemes~\cite{bmr08}, for example.  Some Azumaya algebras related to quantum groups can be found in~\cite{browngoodearl97} and a number of other examples of Azumaya algebras and references can be found in~\cite[\S 3]{tabuadavandenbergh15}.

\section{Derived equivalences as Morita equivalences and other fundamentals}
\label{sect:deq}

Throughout $R$ will be a commutative algebra which is flat over $k$.  All other algebras are assumed to be flat over $k$ as well.
\par

We impose this restriction so that we may derive tensor products, say $\ox_A$ for example, on the categories $B\ox A^{op}$-mod and $A\ox B^{op}$-mod via bimodules which are flat over either $A$ or $B$.  Under our flatness assumption, projective bimodule will have the desired property.

\subsection{Shifts by sections of the constant sheaf}
\label{sect:shifts}

Let $X$ be a scheme.  The constant sheaf $\underline{G}$ associated to an abelian group $G$ is the sheaf of continuous functions from $X$ to the discrete space $G$.  By continuity any section $g\in \Gamma(X,\underline{G})$ disconnects $X$ into a number of components.  Namely, one component for each element in the image of $g:X\to G$.
\par

Take $X=\Spec(R)$ and $G=\mathbb{Z}$.  Let $n$ be a global section of $\underline{\mathbb{Z}}$ and take
\[
\Lambda=\{n^{-1}(i):i\in \mathbb{Z}\}
\]
to be the set of components specified by $n$.  Note that by quasi-compactness of $X$ the cardinality of $\Lambda$ is finite.  We have $X=\coprod_{\lambda\in \Lambda}\lambda$ and we get a collection of orthogonal idempotents $\{e_\lambda\}_{\lambda}$ with $\sum_\lambda e_\lambda=1$ which specify a dual decomposition
\[
R=\prod_{\lambda\in \Lambda} R_\lambda.
\]
The collection $\{e_\lambda\}_\lambda$ can be found abstractly as a collection of orthogonal idempotents simultaneously satisfying $V(e_\lambda)=\coprod_{\{\mu\in\Lambda:\mu\neq\lambda\}} \mu$ and $1=\sum_\lambda e_\lambda$~\cite[Ex.2.25]{eisenbud}.  As usual, $V(e_\lambda)$ denotes the closed subset in $X$ defined by $e_\lambda$.

\begin{lemma}
Take a decomposition $\Spec(R)=\coprod_{\lambda\in\Lambda} \lambda$ as above, and consider two choices of corresponding orthogonal idempotents $\{e_\lambda\}_\lambda$ and $\{e'_\lambda\}_\lambda$.  Then for each $\lambda\in\Lambda$ we have $e_\lambda=r e'_\lambda$ and $e'_\lambda=r'e_\lambda$ for some $r,r'\in R$.  In particular, for any $R$-module $M$ we have $e_\lambda M=e'_\lambda M$.
\end{lemma}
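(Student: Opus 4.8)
The plan is to argue entirely in terms of the supports of the idempotents. First I would record the translation between the fixed decomposition and the two idempotent systems. Since $\Spec(R)=\coprod_{\mu\in\Lambda}\mu$ is a disjoint union and $V(e_\lambda)=\coprod_{\mu\neq\lambda}\mu$, the nonvanishing locus $\Spec(R)\setminus V(e_\lambda)$ equals $\lambda$, and likewise $\Spec(R)\setminus V(e'_\lambda)=\lambda$. Thus the two idempotents $e_\lambda$ and $e'_\lambda$ attached to a given $\lambda$ have exactly the same vanishing and nonvanishing loci, and I can phrase the whole argument around this.

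The heart of the argument is to show $e_\lambda(1-e'_\lambda)=0$. I would observe that $e_\lambda(1-e'_\lambda)$ is again an idempotent, being a product of the commuting idempotents $e_\lambda$ and $1-e'_\lambda$, and then compute its nonvanishing locus to be $\big(\Spec(R)\setminus V(e_\lambda)\big)\cap V(e'_\lambda)=\lambda\cap\coprod_{\mu\neq\lambda}\mu=\emptyset$. An idempotent whose nonvanishing locus is empty lies in every prime of $R$, hence in the nilradical; being idempotent, it must therefore be $0$. This yields $e_\lambda=e_\lambda e'_\lambda$, so one may take $r=e_\lambda$; running the same computation with the roles of $e_\lambda$ and $e'_\lambda$ interchanged gives $e'_\lambda=e'_\lambda e_\lambda$ and $r'=e'_\lambda$. (In fact this even shows $e_\lambda=e_\lambda e'_\lambda=e'_\lambda$, so the two systems literally coincide, but only the divisibility relation is needed for what follows.)

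The final ``in particular'' clause is then formal. Given $e_\lambda=r e'_\lambda$ and any $R$-module $M$, for $m\in M$ we have $e_\lambda m=r e'_\lambda m=e'_\lambda(rm)\in e'_\lambda M$, so $e_\lambda M\subseteq e'_\lambda M$; the reverse inclusion follows from $e'_\lambda=r'e_\lambda$ by symmetry, giving $e_\lambda M=e'_\lambda M$.

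I do not anticipate a genuine obstacle here: the statement reduces to manipulating commuting idempotents and intersecting their supports. The only step that deserves a word of care is the passage from ``empty nonvanishing locus'' to ``the element is zero,'' which rests on the standard fact that an idempotent contained in the nilradical vanishes; everything else is a direct computation.
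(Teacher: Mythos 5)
Your proof is correct, and it takes a route that is organized differently from the paper's, with a slightly stronger payoff. The paper reduces to a two-component decomposition $\Spec(R)=X_1\coprod X_2$, introduces the ideals $I_1=I(X_2)$, $I_2=I(X_1)$ of the complementary closed sets, and cites a lemma from Liu's book to conclude that $V(e_i)=V(e_i')=V(I_i)$ forces the kernels of $R/(e_i)\to R/I_i$ and $R/(e_i')\to R/I_i$ to consist of nilpotents; since $e_i\in I_i$, this gives $e_i^n=re_i'$ for some $n$ and some unspecified $r$, and idempotency upgrades this to $e_i=e_i^n=re_i'$. You instead run the nilpotency argument on the single auxiliary idempotent $e_\lambda(1-e_\lambda')$: its nonvanishing locus is empty, so it lies in every prime, hence in the nilradical, hence is zero. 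Both arguments rest on the same underlying fact (an element contained in every prime is nilpotent, and a nilpotent idempotent vanishes), but yours is more self-contained --- no reduction to two components, no external citation --- and it buys more: you obtain the explicit multipliers $r=e_\lambda$, $r'=e_\lambda'$, and indeed the literal equality $e_\lambda=e_\lambda'$, i.e., the idempotents attached to a clopen decomposition are unique. (This equality also follows from the paper's conclusion, since $e_\lambda=re_\lambda'$ and $e_\lambda'=r'e_\lambda$ give $e_\lambda=e_\lambda e_\lambda'=e_\lambda'$, but the paper does not remark on it.) The one step you leave implicit is the identity $\Spec(R)\setminus V(1-e_\lambda')=V(e_\lambda')$, which is special to idempotents: every prime contains exactly one of $e_\lambda'$ and $1-e_\lambda'$, because their product is $0$ and their sum is $1$. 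That deserves a line in a written-up version, but it is standard and does not affect correctness.
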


\begin{proof}
It suffices to consider a decomposition $\Spec(R)=X_1\coprod X_2$.  Take $I_1=I(X_2)$, $I_2=I(X_1)$.  Let $\{e_1,e_2\}$ and $\{e'_1,e'_2\}$ be collections of orthogonal idempotents with
\[
e_i,e_i'\in I_i,\ \mathrm{and}\ e_1+e_2=e_1'+e_2'=1.
\]
Then we have $V(e_i)=V(e'_i)=V(I_i)$ which implies that the kernel of each map 
\[
R/(e_i)\to R/I_i\ \ \mathrm{and}\ \ R/(e_i')\to R/I_i
\]
consists entirely of nilpotent elements~\cite[Lemma 1.6]{liu}.  In particular, some positive power of $e_i\in I_i$ is equal to $re'_i$ for some $r\in R$, and hence $e_i=e_i^n=re'_i$.  Similarly, $e'_i=r'e_i$ for some $r'\in R$.
\end{proof}

Let $M$ be a complex of $R$-modules and $\{e_\lambda\}_\lambda$ be a finite collection of orthogonal idempotents with $1=\sum_\lambda e_\lambda$.  Then by $R$-linearity of the differential on $M$ we see that $M=\oplus_\lambda e_\lambda M$ is a chain complex decomposition.  By the above lemma the following operation is well defined.

\begin{definition}
Take $X=\Spec(R)$ and $n\in\Gamma(X,\underline{\mathbb{Z}})$.  Take also $\Lambda=\{n^{-1}(i):i\in\mathbb{Z}\}$ and let $M$ be a complex of $R$-modules.  We define the shift $\Sigma^n M$ by
\[
\Sigma^n M=\bigoplus_{\lambda\in \Lambda} (\Sigma^{n(\lambda)}e_\lambda M),
\]
where $\{e_\lambda\}_\lambda$ is any collection of orthogonal idempotents as above.
\end{definition}

One can check that for section $m$ and $n$ of the constant sheaf, with corresponding collections of idempotents $\{f_\mu\}_\mu$ and $\{e_\lambda\}_\lambda$, we have
\[
\Sigma^m(\Sigma^n M)=\bigoplus_{\mu,\nu}\left(\Sigma^{m(\mu)+n(\lambda)}(f_\mu e_\lambda) M\right)=\Sigma^{m+n}M.
\]

\subsection{Derived equivalences as Morita equivalences}
\label{sect:deq_as_meq}

We would like to prove the following proposition, which provides the foundation of our study.

\begin{proposition}\label{prop:deq_as_meq}
Let $R$ be a commutative algebra and take $X=\Spec(R)$.  Let $T$ be an object in $\DPic(A,B)$ with $A$ Azumaya over $R$.  Then $T\cong \Sigma^n M$ for a unique section $n\in \Gamma(X,\underline{\mathbb{Z}})$ and unique invertible $B\ox A^{op}$-module $M$, up to isomorphism.
\end{proposition}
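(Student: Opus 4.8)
The plan is to detect the shift fiberwise and then globalize, using the Azumaya hypothesis to make each fiber semisimple. Recall that $T$ is perfect over $A$, and since $A$ is finitely generated projective over $R$ it is perfect over $R$ as well. For a prime $\mathfrak{p}$ write $k(\mathfrak{p})$ for the residue field and $A(\mathfrak{p})=A\ox_R k(\mathfrak{p})$; because $A$ is Azumaya and flat over $R$, $A(\mathfrak{p})$ is an Azumaya algebra over the field $k(\mathfrak{p})$, i.e. a central simple algebra, and in particular semisimple. Base change along $-\ox_R^\L k(\mathfrak{p})$ is monoidal with respect to $\ox_A\rightsquigarrow \ox_{A(\mathfrak{p})}$, so it carries the invertible object $T$ to an invertible object $T(\mathfrak{p})$ in $D^b(A(\mathfrak{p})\ox A(\mathfrak{p})^{op})$, with inverse $T^\vee(\mathfrak{p})$. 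I would then argue that any invertible object over a central simple algebra is concentrated in a single cohomological degree: over the semisimple algebra $A(\mathfrak{p})\ox A(\mathfrak{p})^{op}$ every complex is formal, the tensor product over $A(\mathfrak{p})$ of two nonzero bimodules is again nonzero, and tracking the extreme nonzero degrees in $T(\mathfrak{p})\ox^\L_{A(\mathfrak{p})} T^\vee(\mathfrak{p})\cong A(\mathfrak{p})$ forces $T(\mathfrak{p})$ to live in a single degree $d(\mathfrak{p})$.

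Next I would globalize. The functions $\mathfrak{p}\mapsto \dim_{k(\mathfrak{p})} H^i(T\ox_R^\L k(\mathfrak{p}))$ are upper semicontinuous and vanish for all but finitely many $i$, so each locus $\{\mathfrak{p}:d(\mathfrak{p})=i\}$ is both closed and open; this produces a locally constant function, i.e. a section $n\in\Gamma(X,\underline{\mathbb{Z}})$, with $d=n$. After replacing $T$ by $\Sigma^{-n}T$ every derived fiber is concentrated in degree $0$, which is exactly the statement that $\Sigma^{-n}T$ has tor-amplitude $[0,0]$ over $R$; being perfect it is then quasi-isomorphic to a finitely generated projective $R$-module $M=H^0(\Sigma^{-n}T)$. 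Since $A$ is separable over $R$, an $A$-module is $A$-projective as soon as it is $R$-projective, so $M$ is finitely generated projective over $A$ and hence $M\ox_A^\L-=M\ox_A-$. Applying the same analysis to $T^\vee$ gives $T^\vee\cong\Sigma^{-n}M'$ for a module $M'$ (the shift must be $-n$, since $T\ox_A^\L T^\vee\cong B$ sits in degree $0$), and cancelling the shifts in the two tilting relations yields $M\ox_A M'\cong B$ and $M'\ox_B M\cong A$. Thus $M$ is an invertible $B\ox A^{op}$-module and $T\cong\Sigma^n M$. Uniqueness is then formal: $n$ is determined by the cohomological support of $T$ on each component, and $M\cong\Sigma^{-n}T$ up to isomorphism.

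I expect the fiberwise concentration step to be the conceptual heart of the argument and the place where the Azumaya hypothesis is genuinely used: for a general algebra a tilting complex can spread over many degrees on a single connected component, and it is the semisimplicity of the central simple fibers, together with non-vanishing of bimodule tensor products over them, that collapses this. On the technical side, the main care is needed in the globalization — I must justify the base-change compatibilities for $\ox_A$ (and for the $\RHom$ identifying $B$) of perfect complexes and run the tor-amplitude/fiberwise argument without a Noetherian hypothesis, relying on upper semicontinuity of fiber dimensions for perfect complexes and on separability to transfer projectivity from $R$ to $A$.
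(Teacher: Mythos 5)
The central step of your argument contains a genuine gap. The identity you track degrees in, $T(\mathfrak{p})\ox^{\L}_{A(\mathfrak{p})}T^\vee(\mathfrak{p})\cong A(\mathfrak{p})$, is not one of the tilting relations: since $T$ is a $B\ox A^{op}$-module, tensoring over $A$ computes $B$, not $A$, so base change of $T\ox_A^{\L}T^\vee\cong B$ gives $T(\mathfrak{p})\ox_{A(\mathfrak{p})}^{\L}T^\vee(\mathfrak{p})\cong B\ox_R^{\L}k(\mathfrak{p})$. Your write-up silently assumes $B=A$ throughout (e.g.\ you place $T(\mathfrak{p})$ in $D^b(A(\mathfrak{p})\ox A(\mathfrak{p})^{op})$), but Proposition~\ref{prop:deq_as_meq} allows an arbitrary flat $k$-algebra $B$. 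The right-hand side above is a \emph{derived} fiber of $B$, and there is no a priori reason it is concentrated in a single degree: that would require $B$ to be flat over $R$, where the $R$-algebra structure on $B$ is itself only defined through $T$. Indeed, that $B$ is Azumaya (hence flat) over $R$ is a \emph{consequence} of the proposition, via Corollary~\ref{cor:antieau}, so it cannot be assumed. The relation whose base change does land in $A(\mathfrak{p})$ is $T^\vee\ox_B^{\L}T\ox_R^{\L}k(\mathfrak{p})\cong A(\mathfrak{p})$, but there the tensor is taken over $B$, where you have neither vanishing of higher Tor nor nonvanishing of tensor products of nonzero modules---precisely the two ingredients your extreme-degree argument needs. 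So the fiberwise concentration step, which you correctly identify as the heart of the proof, does not close as written (it is fine in the special case $B=A$).

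The gap is repairable inside your framework, but by a different mechanism: use the quasi-isomorphism $B\to\RHom_{A^{op}}(T,T)$ coming from the tilting property, together with derived base change for the perfect complex $T$, to get $B\ox_R^{\L}k(\mathfrak{p})\simeq\RHom_{A(\mathfrak{p})^{op}}(T(\mathfrak{p}),T(\mathfrak{p}))$. The left side is a derived fiber of a module, so its cohomology sits in degrees $\leq 0$; but if $T(\mathfrak{p})$ had nonzero cohomology in two degrees $i<j$, then formality over the semisimple ring $A(\mathfrak{p})$ and nonvanishing of $\Hom$ between nonzero modules over a simple Artinian ring would force nonzero cohomology of the right side in degree $j-i>0$. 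This contradiction yields concentration in a single degree, after which your globalization (semicontinuity of fiber dimensions, tor-amplitude $[0,0]$, separability to transfer projectivity from $R$ to $A$) does go through. For comparison, the paper avoids derived fibers entirely: it localizes (exactly) at each prime, observes that $A_{\mathfrak{p}}$ is then a noncommutative local ring, and invokes \cite[Theorem 2.3]{yekutieli99}, which says a tilting complex involving a local ring is a shifted invertible bimodule (Lemma~\ref{lem:0}); it then proves the cohomology is finitely generated projective over $R$ by a descending induction with idempotents (Lemma~\ref{lem:00}) rather than by a tor-amplitude argument. Both routes must confront the asymmetry between $A$ and $B$; the paper's does so by outsourcing it to Yekutieli's local theorem, while yours needs the $\RHom$ patch above.
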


Let us record some additional information before giving the proof of Proposition~\ref{prop:deq_as_meq}.
\par

We give $B$ an $R$-algebra structure as follows: We have $R=\mathrm{Center}(A)$ and get an induced isomorphism $R\cong \mathrm{Center}(B)$ via the sequence of isomorphisms
\[
\begin{array}{l}
R\overset{\mathrm{r.act}}\longrightarrow\End_{D^b(A\ox A^{op})}(A)\overset{T\ox^\L_A-}\longrightarrow \End_{D^b(B\ox A^{op})}(T)\\
\hspace{1cm}\overset{-\ox^\L_AT^\vee}\longrightarrow \End_{D^b(B\ox B^{op})}(B)\overset{\mathrm{l.act}^{-1}}\longrightarrow\mathrm{Center}(B),
\end{array}
\]
as in~\cite{yekutieli99} ({\it cf.}~\cite[Proposition 9.1]{rickard89}).  Note that although $T$ needn't be symmetric as an $R$-module under this identification $R=\mathrm{Center}(B)$, its cohomology will be.
\par

In the proof of Proposition~\ref{prop:deq_as_meq} we abuse notation and take, for any $p\in X=\Spec(R)$, $T_p$ to be the two sided localization $R_p\ox_R T\ox_R R_p$.  By~\cite[Lemma 2.6]{yekutieli99} the localization $T_p$ will be an object in $\DPic(A_p,B_p)$.

\subsection{The proof of Proposition~\ref{prop:deq_as_meq}}

Our proof is similar to that of~\cite[Theorem 1.9]{yekutieli10}.  We break the proof down into a sequence of lemmas for the sake of clarity.

\begin{lemma}\label{lem:0}
For $T$ as in Proposition~\ref{prop:deq_as_meq} and $p\in X$, the localization $T_p$ is isomorphic, in $D^b(B_p\ox A_p^{op})$, to an invertible $B_p\ox A_p$-module shifted to a single degree.  Furthermore, the localization of the cohomology $H^\bullet(T)_p$ is concentrated in a single degree.
\end{lemma}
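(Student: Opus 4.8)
The plan is to reduce the whole question to the residue field at $p$, where the Azumaya algebra becomes a central simple algebra and hence semisimple. First I would localize and work over the local ring $R_p$, recording two facts: $A_p$ is Azumaya over $R_p$, and $T_p$, being a tilting complex, is a perfect complex over $A_p$ (hence also perfect over $R_p$, since $A_p$ is finite free over the local ring $R_p$). Writing $\kappa=\kappa(p)$ for the residue field, reduction modulo the maximal ideal $\mathfrak{m}_p$ produces a central simple $\kappa$-algebra $\bar A=A_p\ox_{R_p}\kappa$, and I would form the derived fibre $\bar T=T_p\ox^\L_{R_p}\kappa$, a perfect complex over $\bar A$. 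Note also that $\bar T\neq 0$ because $T_p$ is invertible.

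The crux is to show that $\bar T$ has cohomology concentrated in a single degree. Since $\bar A$ is semisimple, $\bar T$ splits as $\bigoplus_i \Sigma^{-i}H^i(\bar T)$, so that $\RHom_{\bar A}(\bar T,\bar T)\cong\bigoplus_{i,j}\Sigma^{i-j}\Hom_{\bar A}(H^i(\bar T),H^j(\bar T))$, with the $(i,j)$ summand living in cohomological degree $j-i$. On the other hand, localizing the left-action quasi-isomorphism $B\xrightarrow{\sim}\RHom_A(T,T)$ and applying base change along $R_p\to\kappa$ for the \emph{perfect} complex $T_p$ gives $B_p\ox^\L_{R_p}\kappa\cong\RHom_{\bar A}(\bar T,\bar T)$. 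The point is that, although I do not want to assume $B$ is flat over $R$, the object $B_p$ sits in cohomological degree $0$, so $B_p\ox^\L_{R_p}\kappa$ (a complex of Tor's) has no cohomology in positive degrees. If $H^\bullet(\bar T)$ were spread over degrees $a<b$ with $H^a(\bar T),H^b(\bar T)\neq 0$, then the degree-$(b-a)$ part of $\RHom_{\bar A}(\bar T,\bar T)$ would be exactly $\Hom_{\bar A}(H^a(\bar T),H^b(\bar T))$, which is nonzero because over the simple algebra $\bar A$ any two nonzero modules admit a nonzero homomorphism. This contradicts the vanishing in positive degrees, forcing $a=b$.

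Finally I would lift this conclusion back to $T_p$ itself. As $A_p$ is module-finite over the local ring $R_p$, it is semiperfect with $\mathrm{rad}(A_p)=\mathfrak{m}_pA_p$ (the quotient $\bar A$ being semisimple), so $T_p$ admits a minimal representative by a bounded complex of finitely generated projective $A_p$-modules whose reduction modulo $\mathfrak{m}_p$ has zero differential and therefore computes $H^\bullet(\bar T)$ termwise. By the previous step this reduced complex is concentrated in one degree $d$, so the projective modules vanish outside degree $d$ and $T_p\cong N[-d]$ for a finitely generated projective $A_p$-module $N$; the statement that $H^\bullet(T)_p=H^\bullet(T_p)$ is concentrated in degree $d$ is then immediate. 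Invertibility of $N$ as a $B_p\ox A_p^{op}$-module follows by shifting the tilting isomorphisms $T_p\ox^\L_{A_p}T_p^\vee\cong B_p$ and $T_p^\vee\ox^\L_{B_p}T_p\cong A_p$ into degree $0$: since $N$ is projective on one side the derived tensor products become ordinary ones, giving $N\ox_{A_p}N^\vee\cong B_p$ and $N^\vee\ox_{B_p}N\cong A_p$. I expect the main obstacle to be the second paragraph, specifically producing the base-change isomorphism $\RHom_{\bar A}(\bar T,\bar T)\cong B_p\ox^\L_{R_p}\kappa$ cleanly without any flatness hypothesis on $B$, and isolating the one feature of $B_p$ that is actually used, namely concentration in non-positive degrees after reduction; the remainder is the standard semisimplicity and minimal-complex machinery.
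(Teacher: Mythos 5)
Your route is genuinely different from the paper's. The paper disposes of the lemma in two lines: it observes that $A_p$ is Azumaya over a local ring, hence has a unique maximal two-sided ideal \cite[Proposition 1.1]{milne80}, and then invokes \cite[Theorem 2.3]{yekutieli99}, which says precisely that a tilting complex over such a ``noncommutative local'' ring is the shift of an invertible bimodule. You instead reprove this special case from scratch, and your central mechanism --- passing to the derived fibre $\bar T=T_p\ox^\L_{R_p}\kappa$, splitting it by semisimplicity of the central simple algebra $\bar A$, and playing the positive-degree part of $\RHom_{\bar A}(\bar T,\bar T)$ against the fact that $B_p\ox^\L_{R_p}\kappa$ has no cohomology in positive degrees --- is correct. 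The base change you worried about is fine: represent $T_p$ by a bounded complex of finitely generated projective $A_p$-modules; the total Hom complex is then termwise finitely generated projective over $R_p$, and reduction mod $\mathfrak{m}_p$ commutes with Hom termwise, with no hypothesis on $B$. (You do silently use $H^\bullet(T)_p=H^\bullet(T_p)$, which, since $T_p$ is a two-sided localization, needs the $R$-centrality of $H^\bullet(T)$ recorded in Section~\ref{sect:deq_as_meq}; this is the same point the paper makes explicitly.)

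The genuine gap is in the step you treated as standard machinery: it is false that a module-finite algebra over a local ring is semiperfect --- that requires a Henselian (e.g.\ complete) base. The paper's own main example refutes your claim: localize the Weyl algebra $A_n(k)$, $\mathrm{char}(k)=p$, $k$ algebraically closed, at a maximal ideal $\mathfrak{m}$ of its center. Then $A_p$ is a domain, so its only idempotents are $0$ and $1$, while $\bar A\cong M_{p^n}(k)$; the matrix idempotents cannot lift, so $A_p$ is not semiperfect, and minimal complexes of projective $A_p$-modules need not exist at all (the complex $A_p\overset{\cdot x_1}{\longrightarrow}A_p$ is neither minimal nor a sum of a minimal and a contractible complex, because elements of $A_p$ outside $\mathrm{rad}(A_p)=\mathfrak{m}A_p$ need not be units). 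Fortunately the repair preserves your architecture: run the minimality argument over the \emph{commutative} local ring $R_p$ instead. Since $A_p$ is finite projective over $R_p$, the complex $T_p$ is perfect over $R_p$; over a commutative local ring every bounded complex of finite free modules does split as minimal plus contractible (finitely generated projective $=$ free, and a unit entry in a differential lets you split off $R_p\overset{1}{\to}R_p$), the minimal part reduces mod $\mathfrak{m}_p$ to a complex with zero differential, and this reduction computes $H^\bullet(\bar T)$ termwise. Your residue-field step then forces the minimal complex to live in a single degree $d$, so $H^\bullet(T_p)$ is concentrated in degree $d$ and $T_p\cong\Sigma^{-d}H^d(T_p)$ in $D^b(B_p\ox A_p^{op})$ by truncation. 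Invertibility of $N=H^d(T_p)$ then follows by applying the same concentration argument to $T_p^\vee$ and taking $H^0$ of the two tilting isomorphisms; this needs no projectivity of $N$ over $A_p$, since $H^0$ of a derived tensor product of modules is the ordinary tensor product.
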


\begin{proof}
Since localization is exact and the cohomology of $T$ is $R$-central we have $H^\bullet(T_p)=H^\bullet(T)_p$.  So it suffices to prove the result for $T_p$.
\par

Since $A_p$ is Azumaya over a local ring, it is a noncommutative local ring in the sense that it has a unique maximal two sided ideal~\cite[Proposition 1.1]{milne80}.  Thus~\cite[Theorem 2.3]{yekutieli99} implies $T_p$ is isomorphic to an invertible bimodule shifted to a single degree, and we are done.
\end{proof}

By Lemma~\ref{lem:0} we can now produce a well defined set map $n:X\to \mathbb{Z}$ taking $p$ to the unique integer $n(p)$ with $H^{-n(p)}(T)_p\neq 0$.  By the following lemma this map will be continuous, and hence a section of the constant sheaf of integers on $X$.

\begin{lemma}\label{lem:00}
For $T$ as in Proposition~\ref{prop:deq_as_meq}, and any integer $i$, the support of the cohomology $H^i(T)$ is both closed and open.  Furthermore, for $i\neq j$ we have $\mathrm{Supp}(H^i(T))\cap\mathrm{Supp}(H^j(T))= \emptyset$.
\end{lemma}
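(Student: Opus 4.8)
The plan is to get closedness and disjointness essentially for free from the pointwise structure recorded in Lemma~\ref{lem:0}, and then to deduce openness from these two facts together with the boundedness of the complex.

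First I would observe that each $H^i(T)$ is finitely generated over $R$. As noted in Section~\ref{sect:dpg}, the tilting complex $T$ is perfect over $A$, so it is quasi-isomorphic to a bounded complex of finitely generated projective $A$-modules; hence every cohomology module $H^i(T)$ is finitely generated over $A$. Since $A$ is finitely generated (indeed projective) over $R$ by the Azumaya hypothesis, $H^i(T)$ is finitely generated over $R$ as well. For a finitely generated module over a commutative ring one has $\mathrm{Supp}(H^i(T))=V(\mathrm{Ann}_R H^i(T))$, which is closed. This disposes of the closedness half of the first assertion.

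Next I would settle disjointness. As in the proof of Lemma~\ref{lem:0}, exactness of localization together with $R$-centrality of the cohomology gives $H^\bullet(T)_p=H^\bullet(T_p)$ for every $p\in X$, and Lemma~\ref{lem:0} tells us this is concentrated in a single degree. Thus for each fixed $p$ at most one of the modules $H^i(T)_p$ is nonzero, which is precisely the claim that the supports $\mathrm{Supp}(H^i(T))$ are pairwise disjoint. Moreover $T_p$ is a \emph{nonzero} invertible bimodule shifted to a single degree, so $H^\bullet(T)_p\neq 0$; hence at each $p$ \emph{exactly} one $H^i(T)_p$ is nonzero. Writing $C_i=\mathrm{Supp}(H^i(T))$, this says the $C_i$ partition $X$.

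Finally, openness. Since $T$ is bounded, only finitely many $C_i$ are nonempty, so we have a finite partition $X=\coprod_i C_i$ into closed sets. The complement $X\setminus C_i=\bigcup_{j\neq i}C_j$ is then a finite union of closed sets, hence closed, so $C_i$ is open; being also closed, it is clopen. I expect this last packaging to be the only real subtlety: rather than trying to exhibit $\mathrm{Supp}(H^i(T))$ directly as an open set, one leverages that the finitely many closed, disjoint supports exhaust $X$. The inputs—finite generation for closedness and Lemma~\ref{lem:0} for the pointwise structure—are routine, and the crux is recognizing that finiteness plus disjointness plus covering upgrades closed to clopen.
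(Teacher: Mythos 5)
Your endgame---finitely many pairwise disjoint closed supports covering $X$ force each support to be clopen---is correct, and the disjointness and covering claims do follow from Lemma~\ref{lem:0} exactly as you say. The genuine gap is in your very first step: the inference ``$T$ is perfect over $A$, hence every $H^i(T)$ is finitely generated'' is not valid in the generality of the paper. Each $H^i(T)$ is a quotient of $\ker(d^i)$, i.e.\ of a \emph{submodule} of a finitely generated projective module, and over a non-Noetherian (more precisely, non-coherent) ring such kernels need not be finitely generated. For instance, for $R=k[y,x_1,x_2,\dots]/(yx_i)_i$ the perfect complex $R\xrightarrow{\,y\,}R$ has kernel $\mathrm{Ann}_R(y)=(x_1,x_2,\dots)$, which is not finitely generated. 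The paper imposes no Noetherian hypothesis on $R$---indeed, removing such finiteness assumptions is precisely how Corollary~\ref{cor:dpiccom} refines Yekutieli's result, which required a finite decomposition of $R$ into indecomposables---so closedness of $\mathrm{Supp}(H^i(T))$ cannot be obtained this way for all $i$ simultaneously, and without closedness your partition argument has nothing to run on.

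What perfectness gives for free is only that the \emph{top} nonvanishing cohomology is finitely presented: after trimming, $H^N(T)$ is the cokernel of a map of finitely generated projectives. This is exactly how the paper's proof is structured. It first treats $H^N(T)$: finitely presented plus locally free at every point (by Lemma~\ref{lem:0}) gives locally free of finite rank, whose support is automatically clopen, say $V(e)$ for an idempotent $e$. It then inverts $e$ to produce a new tilting complex $T'=R[e^{-1}]\ox_R T\ox_R R[e^{-1}]$ whose cohomology agrees with $H^{<N}(T)$, and recurses downward. If you are willing to assume $R$ Noetherian, your argument is complete and in fact slicker than the paper's (no induction, and openness comes from the partition rather than from projectivity of each cohomology module); but as written it does not prove the lemma as stated, and the repair essentially forces you back to the paper's downward induction or some other device producing finite generation of the intermediate cohomology.
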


\begin{proof}
We have already seen that for each $p\in X$ there is a unique integer $n(p)$ so that $H^{-n(p)}(T)_p\neq 0$.  So disjointness of the supports is clear.  We will show that the cohomology is a locally free of finite rank over $R$ (relative to the Zariski topology of $\Spec(R)$).  Equivalently, we show that the cohomology is finitely generated and projective over $R$.
\par

Since the localization $T_p$ is isomorphic to an invertible bimodule shifted to a single degree, we may suppose $T_p$ itself is concentrated in a single degree.  For each $i$ we then have that $H^i(T)_p=H^i(T_p)$ is either $0$ or equal to $T_p$.  So each $H^i(T)_p=H^i(T_p)$ is finitely generated and projective over $A_p$.  Since $A_p$ is finitely generated and projective over $R_p$, we find that $H^i(T)_p$ has this property as well.  If we can show that each $H^i(T)$ is finitely presented over $R$ then we will have what we want, i.e. that the cohomology is locally free of finite rank over $R$ in each degree.
\par

Let $N$ be maximal with $H^N(T)\neq 0$.  Since $T$ is perfect $H^N(T)$ is finitely presented, and the fact that $H^N(T)_p$ is free of finite rank over $R_p$ at each point $p\in X$ implies that $H^N(T)$ is locally free of finite rank over $R$.  Since the support of such a module is always closed and open there is a idempotent $e\in R$ such that $\mathrm{Supp}(H^N(T))=V(e)$.
\par

From $e$ we can produce a tilting complex $T'=R[e^{-1}]\ox_R T\ox_R R[e^{-1}]$ between $A[e^{-1}]$ and $B[e^{-1}]$~\cite[Lemma 2.6]{yekutieli99}.  This new tilting complex will have cohomology concentrated in degree $<N$, and for each $i<N$ we will have
\[
H^i(T')=H^i(T)[e^{-1}]=H^i(T).
\]
We can therefore proceed by induction to find that each $H^i(T)$ is locally free of finite rank over $R$, and has support which is both closed and open.
\end{proof}

Along with the section $n\in\Gamma(X,\underline{\mathbb{Z}})$ defined above we will need to show that the cohomology $M=H^\bullet(T)$ is an invertible $B\ox A^{op}$-module, after forgetting the degree.  As an intermediate step we have

\begin{lemma}\label{lem:000}
Let $T$ be as in Proposition~\ref{prop:deq_as_meq}.  For any integer $i$ the cohomology group $H^i(T)$ is flat over both $A$ and $B$.
\end{lemma}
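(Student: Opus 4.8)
The plan is to reduce the statement to the local structure of $T$ already recorded in Lemma~\ref{lem:0}, and then to glue by a local--global principle for flatness relative to the central ring $R$. Fix an integer $i$ and write $M=H^i(T)$, a $B\ox A^{op}$-module whose $R$-action is symmetric (as noted for the cohomology of $T$ above). Since $R$ sits inside the centers of both $A$ and $B$, for any left $A$-module $N$ the groups $\mathrm{Tor}^A_j(M,N)$ carry a natural $R$-module structure, and because $R_p$ is flat over $R$ the central localization $R_p\ox_R-$ is exact and carries a projective resolution of $M$ over $A$ to one of $M_p$ over $A_p=R_p\ox_R A$. I would therefore obtain natural identifications $\mathrm{Tor}^A_j(M,N)_p\cong \mathrm{Tor}^{A_p}_j(M_p,N_p)$ for every prime $p\in X$, and likewise for $B$.

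With this in hand the local input is immediate. By Lemma~\ref{lem:0} the localization $M_p=H^i(T)_p=H^i(T_p)$ is, for each $p$, either zero or an invertible $B_p\ox A_p^{op}$-module. An invertible bimodule is precisely the data of a Morita equivalence between $A_p$ and $B_p$, so it is finitely generated projective---in particular flat---both as a module over $A_p$ and as a module over $B_p$. Hence $M_p$ is flat over $A_p$ and over $B_p$ for every prime $p$, and so all the localized groups $\mathrm{Tor}^{A_p}_j(M_p,N_p)$ (and their $B$-analogues) vanish for $j>0$.

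Combining the two steps, $\mathrm{Tor}^A_j(M,N)_p=0$ for all $p$ and all $j>0$; since the vanishing of an $R$-module can be tested at the primes of $R$, this forces $\mathrm{Tor}^A_j(M,N)=0$ for every left $A$-module $N$ and $j>0$, which is flatness of $M$ over $A$. The same argument applied to right $B$-modules gives flatness over $B$. I expect the only point requiring genuine care to be this noncommutative local--global step: one must confirm that central localization is exact and really commutes with $\mathrm{Tor}^A$ and $\mathrm{Tor}^B$, which is exactly where the hypothesis that $R$ lies in the centers of $A$ and $B$ (together with flatness of $R_p$ over $R$) is used.
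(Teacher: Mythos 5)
Your proof is correct and takes essentially the same route as the paper's: localize at each prime $p$, use Lemma~\ref{lem:0} to see that $H^i(T)_p$ is either zero or an invertible bimodule (hence finitely generated projective, hence flat) over $A_p$ and over $B_p$, and then conclude by a local--global principle for flatness over $R$. The only difference is that you justify the local-to-global step explicitly, via the compatibility of $\mathrm{Tor}^A$ and $\mathrm{Tor}^B$ with central localization, whereas the paper simply asserts that ``flatness over $A$ can be checked locally on $X$''; your added detail is sound and fills in exactly the step the paper leaves implicit.
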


\begin{proof}
Take $p\in X=\Spec(R)$ and let $n:X\to \mathbb{Z}$ be as above.  As before, we replace the localization $T_p$ with an invertible bimodule shifted to a single degree.  Since $H^{-n(p)}(T)_p=H^{-n(p)}(T_p)=T_p$ is invertible it is finitely generated and projective over $A_p$, and hence flat as well.  Note that for all $i\neq -n(p)$ we have $H^i(T)_p=0$.  So these modules are projective and flat over $A_p$ as well.  Since flatness over $A$ can be checked locally on $X$ it follows that $H^{i}(T)$ is flat over $A$.  The same argument shows that the cohomology is flat over $B$.
\end{proof}

We can now give the

\begin{proof}[Proof of Proposition~\ref{prop:deq_as_meq}]
We already saw above that the map $n:X\to \mathbb{Z}$, sending $p$ to the unique integer $n(p)$ with $H^{-n(p)}(T)_p\neq 0$, is a section of the constant sheaf of integers.  We will use below the easy fact that for any ring $C$ and $C$-complex $U$ with cohomology concentrated in a single degree $i$, there is a canonical isomorphism $U\to H^i(U)$ in $D^b(C)$~\cite[Ch I.7]{hartshorne66}.
\par

Take $M=H^\bullet(T)$, considered simply as a bimodule concentrated in degree $0$.  So the cohomology of $T$ along with its implicit degree will be given by the shift $\Sigma^nM$.  Note that on each of the opens $\mathrm{Supp}(H^i(T))=n^{-1}(-i)$ the restriction of the complex $T$ will be isomorphic to its cohomology $\Sigma^n M$.  Since these opens are disjoint we will have $T\cong \Sigma^{n}M$ in $D^b(B\ox A^{op})$.  The same is true of the dual $T^\vee$ and its cohomology $M^\vee=H^\bullet(T^\vee)$.
\par

One can check that the section $n$ is such that $\Sigma^{-n}M^\vee\cong T^\vee$.  Hence
\[
A\cong T^\vee\ox_B^{\L} T\cong \Sigma^{-n+n}(T^\vee\ox_B^{\L} T)=(\Sigma^{-n}T^\vee)\ox_B^{\L}(\Sigma^nT)\cong M^\vee\ox_B^{\L} M,
\]
and similarly $B\cong M\ox_A^{\L} M^\vee$, in their respective derived categories of bimodules.  Note that, since $M$ and $M^\vee$ are flat over $A$ and $B$, by Lemma~\ref{lem:000}, we may take $M^\vee\ox^\L_BM=M^\vee\ox_B M$ and $M\ox^\L_A M^\vee=M\ox_A M^\vee$.
\par

Recall that for any ring $C$ the embedding $C\text{-mod}\to D^b(C)$ sending a module to the corresponding complex concentrated in degree $0$ is fully faithful.  Hence the above isomorphisms are isomorphisms of bimodules.  That is to say, $M$ is an invertible bimodule, as desired.
\end{proof}

\subsection{Automorphisms from equivalences}
\label{sect:autsfromeqs}

Let $A$ and $B$ be $R$-central algebras.  Take $X=\Spec(R)$.  For any invertible $B\ox A^{op}$-module $M$ we will get a unique automorphism of the center fitting into a diagram
\begin{equation}\label{eq:1}
\xymatrix{
R\ar[rr]^{f_M}\ar[rd]_{\mathrm{r.act}} & & R\ar[dl]^{\mathrm{l.act}}\\
 & \End_{B\ox A^{op}}(M),
}
\end{equation}
since the right and left actions can alternatively be identified with the applications of the functors $M\ox_A-$ and $-\ox_B M$ to $\End_{A\ox A^{op}}(A)$ and $\End_{B\ox B^{op}}(B)$ respectively.  Similarly, if we shift $M$ by some section of the constant sheaf, and take $T=\Sigma^n M$, we still get a diagram
\begin{equation}\label{eq:2}
\xymatrix{
R\ar[rr]^{f_T}\ar[rd]_{\mathrm{r.act}} & & R\ar[dl]^{\mathrm{l.act}}\\
 & \End_{D^b(B\ox A^{op})}(T)
}
\end{equation}
for a unique automorphism $f_T$.  (Note that $f_T=f_M$ here.)

\begin{definition}
For $R$-central algebras $A$ and $B$, invertible $B\ox A^{op}$-module $M$, section $n\in \Gamma(\Spec(R),\underline{\mathbb{Z}})$, and $T=\Sigma^{n}M$, we take $\phi^M=\Spec(f_M^{-1})$ and $\phi^T=\Spec(f_T^{-1})$ where the $f_?$ are as in (\ref{eq:1}) and (\ref{eq:2}).
\end{definition}

Given an automorphism $\phi\in \Aut(X)$ with dual algebra automorphism $\bar{\phi}\in\Aut(R)$, and $R$-algebra $A$, we let $\phi_\ast A$ denote the ring $A$ with new $R$-algebra structure $R\overset{\bar{\phi}}\to R\to A$.  (This is just the usual pushforward.)  From the above information we get

\begin{lemma}\label{lem:auts}
Let $A$ and $B$ be $R$-central algebras, $M$ be an invertible $B\ox A^{op}$-module, and $T$ be some shift $\Sigma^n M$ by a section of the constant sheaf on $\Spec(R)$.  Then $M$ is an invertible $B\ox_R (\phi^M_\ast A^{op})$-module, and $T$ is an invertible object in $D^b\left(B\ox_R (\phi^T_\ast A^{op}))\right)$.
\end{lemma}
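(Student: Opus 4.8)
The plan is to reduce the whole statement to a bookkeeping of the two $R$-actions. Since $\phi^M_\ast A$ agrees with $A$ as a ring and differs only in its structure map from $R$, the underlying bimodules $M$ and $M^\vee$ are unchanged and $\ox_{\phi^M_\ast A}$ computes the same underlying object as $\ox_A$; so the assertion that $M$ is invertible over $B\ox_R(\phi^M_\ast A^{op})$ amounts to two checks: that $M$ and $M^\vee$ are $R$-central for the twisted structures, and that the defining isomorphisms respect $R$. I would treat the bimodule $M$ first and then deduce the statement for $T$ by transporting through the shift $\Sigma^n$.

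By construction $f_M$ satisfies $\mathrm{r.act}=\mathrm{l.act}\circ f_M$ on $M$. Because the structure map of $\phi^M_\ast A$ is $R\overset{f_M^{-1}}\longrightarrow R\to A$, the right $R$-action on $M$ through $\phi^M_\ast A$ becomes $r\mapsto \mathrm{r.act}(f_M^{-1}(r))=\mathrm{l.act}(r)$, which is exactly the left $R$-action through $B$; hence $M$ is $R$-central for $B\ox_R(\phi^M_\ast A^{op})$. The crux is the dual: writing $f_{M^\vee}$ for the automorphism attached to the invertible $A\ox B^{op}$-module $M^\vee$, I claim $f_{M^\vee}=f_M^{-1}$. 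I would prove this inside $M\ox_A M^\vee\cong B$: balancing over $A$ identifies $\mathrm{r.act}$ on the $M$-factor with the left action on the $M^\vee$-factor, while $R$-centrality of $B$ identifies $\mathrm{l.act}$ on $M$ with the right action on $M^\vee$; feeding $\mathrm{r.act}=\mathrm{l.act}\circ f_M$ into these two identities and cancelling the equivalence $M\ox_A-$ yields $f_{M^\vee}\circ f_M=\mathrm{id}$. Granting this, the same computation shows $M^\vee$ is $R$-central for $(\phi^M_\ast A)\ox_R B^{op}$. Finally, since $B$ and $\phi^M_\ast A$ are themselves $R$-central and the isomorphisms $M\ox_A M^\vee\cong B$ and $M^\vee\ox_B M\cong A$ are bilinear over $B$ and $A$ respectively, they are automatically $R$-linear for the twisted structures, exhibiting $M$ as invertible over $B\ox_R(\phi^M_\ast A^{op})$.

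For $T=\Sigma^n M$ I would first note $\phi^T=\phi^M$: shifting by the section $n$ is an autoequivalence on each component of $\Spec(R)$, so it identifies $\End_{D^b(B\ox A^{op})}(T)$ with $\End_{B\ox A^{op}}(M)$ and carries both $R$-actions across unchanged, giving $f_T=f_M$. Thus $M$ is already invertible over $B\ox_R(\phi^T_\ast A^{op})$, and it remains to push this through the shift. Using that $\Sigma^n$ is $R$-linear and satisfies $\Sigma^m\Sigma^n=\Sigma^{m+n}$, that shifting a single tensor factor shifts the product, and that $T^\vee\cong\Sigma^{-n}M^\vee$ by the proof of Proposition~\ref{prop:deq_as_meq}, one computes $T\ox^\L_{\phi^T_\ast A}(\Sigma^{-n}M^\vee)\cong\Sigma^{n-n}(M\ox_{\phi^T_\ast A}M^\vee)\cong B$ and symmetrically $(\Sigma^{-n}M^\vee)\ox^\L_B T\cong\phi^T_\ast A$, so $T$ is invertible in $D^b(B\ox_R(\phi^T_\ast A^{op}))$. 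The main obstacle is the identity $f_{M^\vee}=f_M^{-1}$: the rest is formal, but this one requires playing the tensor-balancing relation over $A$ against the centrality of $B$ and carefully cancelling the Morita equivalence.
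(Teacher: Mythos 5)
Your proof is correct and matches what the paper intends: the paper states Lemma~\ref{lem:auts} without proof, as an immediate consequence of the identity $\mathrm{r.act}=\mathrm{l.act}\circ f_M$ defining $f_M$ together with the definition of pushforward, and your central computation — that the right $R$-action through $\phi^M_\ast A$, namely $r\mapsto\mathrm{r.act}(f_M^{-1}(r))$, equals $\mathrm{l.act}(r)$, while invertibility is untouched because $\phi^M_\ast A$ equals $A$ as a ring — is exactly that omitted verification. Your auxiliary steps (the identity $f_{M^\vee}=f_M^{-1}$, proved inside $M\ox_A M^\vee\cong B$, and carrying the shift $\Sigma^n$ through the tensor products as in Proposition~\ref{prop:deq_as_meq}) are sound; one could shortcut the dual's centrality via $M^\vee\cong\Hom_B(M,B)$, but your route is equivalent.
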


\section{Some information on the Brauer group}
\label{sect:brauer}

\subsection{The Brauer group via bimodules}
\label{sect:braueralt}

For $X=\Spec(R)$ the {\it Brauer group} is defined as the set
$$
\mathrm{Br}(X):=\{\text{Azumaya algebras on }R\}/{\sim},
$$
where $\sim$ is the equivalence relation proposing $A\sim B$ if and only if there exist finitely generated projective $R$-modules $V$ and $W$ and an $R$-algebra isomorphism
$$
A\ox_R\End_R(V)\cong B\ox_R\End_R(W).
$$
We let $[A]$ denote the Brauer class of a given Azumaya algebra $A$.
\par

By a result of Schack the Brauer group can alternatively be defined as the collection of Azumaya algebras modulo $R$-linear Morita equivalence~\cite[Theorem 3]{schack92}.  Since $R$-linear Morita equivalences are given by $R$-central invertible bimodules this implies

\begin{proposition}[\cite{schack92}]\label{prop:brauereq}
Given two Azumaya algebras $A$ and $B$ over $R$, we have $[A]=[B]$ in the Brauer group if and only if there exists an invertible $B\ox_RA^{op}$-module $M$.
\end{proposition}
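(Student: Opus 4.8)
The plan is to reduce the statement to Schack's theorem~\cite[Theorem 3]{schack92}, which identifies $\mathrm{Br}(X)$ with the set of Azumaya $R$-algebras modulo $R$-linear Morita equivalence, and then to translate the phrase ``$R$-linear Morita equivalence'' into the bimodule language via standard Morita theory. The content of the proof is therefore the dictionary between $R$-linear equivalences of module categories and invertible $R$-central bimodules, i.e. invertible $B\ox_R A^{op}$-modules.

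For the backward implication, suppose $M$ is an invertible $B\ox_R A^{op}$-module with dual $M^\vee$. Then tensoring produces mutually quasi-inverse functors $M\ox_A-\colon A\text{-mod}\overset{\sim}\to B\text{-mod}$ and $M^\vee\ox_B-$, each of which is $R$-linear precisely because $M$ and $M^\vee$ are $R$-central. This is exactly an $R$-linear Morita equivalence between $A$ and $B$, so Schack's identification yields $[A]=[B]$.

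For the forward implication, I would begin from an $R$-linear equivalence $F\colon A\text{-mod}\overset{\sim}\to B\text{-mod}$, which Schack's theorem guarantees once $[A]=[B]$. By the Eilenberg--Watts theorem, any such cocontinuous $R$-linear functor is isomorphic to $M\ox_A-$ for $M=F(A)$, a $B\ox A^{op}$-module, and invertibility of $M$ follows because $F$ is an equivalence, with inverse bimodule $M^\vee=F^{-1}(B)$. The point requiring care is that $R$-linearity of $F$ forces the two $R$-actions on $M$---one coming from $\mathrm{Center}(A)=R$ through the right factor, the other from $\mathrm{Center}(B)=R$ through the left factor---to coincide, so that $M$ is $R$-central and hence a genuine $B\ox_R A^{op}$-module rather than merely a $B\ox A^{op}$-module. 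Verifying this $R$-centrality is the main subtlety, as it is exactly where the ``$R$-linear'' qualifier in Schack's theorem is consumed.

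Alternatively, one can establish the forward direction concretely, bypassing Eilenberg--Watts: given an $R$-algebra isomorphism $A\ox_R\End_R(V)\cong B\ox_R\End_R(W)$ directly from the definition of Brauer equivalence, one observes that $A\ox_R V$ is an invertible $\left(A\ox_R\End_R(V)\right)\ox_R A^{op}$-module, since the $R$-progenerator $V$ witnesses an $R$-linear Morita equivalence between $R$ and $\End_R(V)$, and likewise $B\ox_R W$ for $B$; splicing these two invertible bimodules together along the given algebra isomorphism produces the desired invertible $B\ox_R A^{op}$-module. In either route I expect the main obstacle to be the bookkeeping of the $R$-central conditions, ensuring that every bimodule in sight restricts to a symmetric module over $R\ox_k R^{op}$ so that all tensor products may legitimately be formed over $R$ rather than merely over $k$.
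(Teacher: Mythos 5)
Your proposal is correct and takes essentially the same route as the paper: the paper also deduces the proposition from Schack's theorem \cite[Theorem 3]{schack92} together with the standard Morita-theoretic dictionary identifying $R$-linear equivalences $A\text{-mod}\simeq B\text{-mod}$ with invertible $R$-central bimodules, i.e.\ invertible $B\ox_R A^{op}$-modules, which is precisely your primary argument (your alternative ``splicing'' argument likewise parallels the direct verification from the definition of Brauer equivalence that the author sketches). Nothing further is needed.
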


\begin{definition}
We give the Brauer group $\mathrm{Br}(X)$ the $\Aut(X)$-action defined by $\phi\cdot [A]:=[\phi_\ast A]$.
\end{definition}

The following result was first proved by Antieau using techniques from derived algebraic geometry~\cite{antieau}.  Specifically, part (3) is due originally to Antieau.

\begin{corollary}\label{cor:antieau}
Let $A$ and $B$ be Azumaya algebras over $R$, and take $T$ in $\DPic(A,B)$.  Then we have 
\begin{enumerate}
\item $T\cong \Sigma^n M$ for some invertible $B\ox A^{op}$-module $M$ and $n\in \Gamma(X,\underline{\mathbb{Z}})$.
\item The bimodule $M$ provides a Brauer equivalence between $\phi^T_\ast A=\phi^M_\ast A$ and $B$.
\item (Antieau) If $D^b(A)\cong D^b(B)$ and $R$ is projective over $k$ then $B$ is Brauer equivalent to $\phi_\ast A$ for some automorphism $\phi$ of $X$.
\end{enumerate}
\end{corollary}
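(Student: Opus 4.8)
The plan is to extract all three parts from machinery already in place, so that essentially no new work is required beyond assembling Proposition~\ref{prop:deq_as_meq}, Lemma~\ref{lem:auts}, and Proposition~\ref{prop:brauereq}; the one delicate point is checking the hypotheses under which Rickard's theory applies in part~(3).

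Part~(1) I would read off directly from Proposition~\ref{prop:deq_as_meq}: since $A$ is Azumaya over $R$ and $T\in\DPic(A,B)$, that proposition furnishes the section $n\in\Gamma(X,\underline{\mathbb{Z}})$ and the invertible $B\ox A^{op}$-module $M$ with $T\cong\Sigma^n M$. For part~(2) I would first observe, using the remark after diagram~(\ref{eq:2}), that the automorphism attached to $T=\Sigma^n M$ equals the one attached to $M$, so $\phi^T=\phi^M$ and hence $\phi^T_\ast A=\phi^M_\ast A$. The module $M$ is $R$-central and invertible, and Lemma~\ref{lem:auts} promotes this to the statement that $M$ is an invertible $B\ox_R(\phi^M_\ast A^{op})$-module. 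Since $\phi^M$ is an automorphism of $X$, the pushforward $\phi^M_\ast A$ remains Azumaya over $R$, so $B$ and $\phi^M_\ast A$ are two Azumaya $R$-algebras joined by an $R$-central invertible bimodule. Proposition~\ref{prop:brauereq} then gives $[\phi^M_\ast A]=[B]$, which is the asserted Brauer equivalence.

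For part~(3) the idea is to manufacture a tilting complex from the abstract derived equivalence and then reduce to part~(2). Using Rickard's theory as recalled in Section~\ref{sect:dpg}, a $k$-linear triangulated equivalence $D^b(A)\overset{\sim}\to D^b(B)$ yields a class in $\DPic(A,B)$, provided $A$ and $B$ are projective over $k$. This is exactly where the hypothesis on $R$ enters, and is where I expect the only genuine obstacle to lie: one must note that an Azumaya algebra is finitely generated and projective over its center, so that $A$ and $B$ are projective $R$-modules, and then compose with the projectivity of $R$ over $k$ to conclude that $A$ and $B$ are projective over $k$. With $T\in\DPic(A,B)$ so obtained, parts~(1) and~(2) give $[\phi^T_\ast A]=[B]$, and taking $\phi=\phi^T$ completes the argument.
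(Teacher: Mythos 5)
Your proposal is correct and follows essentially the same route as the paper's proof: Proposition~\ref{prop:deq_as_meq} for (1), Lemma~\ref{lem:auts} (together with Proposition~\ref{prop:brauereq}) for (2), and Rickard's theorem combined with (1)--(2) for (3), with your observation that projectivity of $R$ over $k$ plus projectivity of Azumaya algebras over their centers yields the hypothesis Rickard needs being exactly the point the paper leaves implicit. One small wording caution: your claim that ``$M$ is $R$-central'' before invoking Lemma~\ref{lem:auts} is not right as stated---if $M$ were $R$-central as a $B\ox A^{op}$-module then $\phi^M$ would be the identity; the correct statement, which is precisely what Lemma~\ref{lem:auts} provides and what you in fact use, is that $M$ becomes $R$-central only after twisting the $R$-structure on $A$ by $\phi^M$.
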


\begin{proof}
Statement (1) follows from Proposition~\ref{prop:deq_as_meq}, and (2) follows from Lemma~\ref{lem:auts}.  For (3), Rickard tells us that any derived equivalence produces an object $T$ in $\DPic(A,B)$, provided $A$ and $B$ are projective over $k$~\cite{rickard89,rickard91,yekutieli99}.  Proposition~\ref{prop:deq_as_meq} then tells us that $T$ is the shift of an invertible bimodule, so that we may take $\phi=\phi^T$ to get the result.
\end{proof}

\section{The derived Picard group of an Azumaya algebra}
\label{sect:dpicA}

We fix $A$ to be an Azumaya algebra over $R$.  We also fix $X=\Spec(R)$.

\subsection{The derived Picard group}

\begin{lemma}
The assignment
\[
\Phi:\DPic(A)\to \Aut(X),\ \ [T]\mapsto \phi^T
\]
is a group map with image equal to the stabilizer $\Aut(X)_{[A]}$ of the Brauer class of $A$.
\end{lemma}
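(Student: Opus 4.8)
The plan is to establish three things in turn: that $\Phi$ is well defined on isomorphism classes, that it is multiplicative, and that its image is exactly $\Aut(X)_{[A]}$. By Proposition~\ref{prop:deq_as_meq} every $T\in\DPic(A)$ is isomorphic to $\Sigma^n M$ for a unique invertible $A\ox A^{op}$-module $M$, and since the shift does not affect the endomorphism algebra appearing in diagram~(\ref{eq:2}) we have $f_T=f_M$. So throughout I may replace $T$ by its underlying invertible bimodule $M$ and reduce every assertion about $\phi^T$ to the corresponding one about $\phi^M$. Well-definedness is then immediate, since $f_T$ is read off from the intrinsic right/left $R$-actions on $\End_{D^b(A\ox A^{op})}(T)$ and hence depends only on the class $[T]$.

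For multiplicativity, the heart is the formula $f_{M'\ox_A M}=f_{M'}\circ f_M$ for invertible bimodules $M,M'$. I would prove this directly: acting on $m'\ox m$ on the right by $r\in R$, I use the defining relation $m\cdot r=f_M(r)\cdot m$ on the second factor, slide the central element $f_M(r)$ across $\ox_A$, and then apply $m'\cdot f_M(r)=f_{M'}(f_M(r))\cdot m'$ on the first factor; this exhibits the right $r$-action as the left action of $f_{M'}(f_M(r))$, which is the claim. Feeding this into $\phi^{T}=\Spec(f_T^{-1})$ and using that $\Spec$ is contravariant while we precompose with inversion, the identity $f_{T'\ox_A T}=f_{T'}\circ f_T$ becomes $\phi^{T'\ox_A T}=\phi^{T'}\circ\phi^T$, i.e.\ $\Phi([T'][T])=\Phi([T'])\,\Phi([T])$. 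Thus $\Phi$ is a group homomorphism into $\Aut(X)$.

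For the image, the inclusion $\mathrm{im}\,\Phi\subseteq\Aut(X)_{[A]}$ is essentially Corollary~\ref{cor:antieau}(2): Lemma~\ref{lem:auts} says $M$ is an invertible $A\ox_R(\phi^M_\ast A^{op})$-module, so Proposition~\ref{prop:brauereq} gives $[A]=[\phi^M_\ast A]=\phi^T\cdot[A]$, i.e.\ $\phi^T$ fixes $[A]$. For the reverse inclusion I would start from $\phi\in\Aut(X)_{[A]}$, so that $[\phi_\ast A]=[A]$, and invoke Proposition~\ref{prop:brauereq} to produce an invertible $A\ox_R(\phi_\ast A)^{op}$-module $M$. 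Viewed $k$-linearly this $M$ is an invertible $A\ox A^{op}$-module, hence a class in $\DPic(A)$, and I claim $\phi^M=\phi$. This comes out of the $R$-centrality relation defining an $A\ox_R(\phi_\ast A)^{op}$-module, namely $r\cdot m=m\cdot\bar{\phi}(r)$: rewriting it as $m\cdot s=\bar{\phi}^{-1}(s)\cdot m$ identifies $f_M=\bar{\phi}^{-1}$, whence $\phi^M=\Spec(f_M^{-1})=\Spec(\bar{\phi})=\phi$. Therefore $\phi$ lies in the image and the two sets coincide.

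The step I expect to be most delicate is the reverse inclusion, specifically the bookkeeping that converts ``$M$ is $R$-central for the twisted structure on $\phi_\ast A$'' into the precise identity $f_M=\bar{\phi}^{-1}$ (and the consequent $\phi^M=\phi$). Keeping the two $R$-actions straight --- the left action coming from $A$ versus the right action coming from the $\bar{\phi}$-twisted copy --- and not dropping an inverse when passing through $\phi^{(-)}=\Spec(f_{(-)}^{-1})$ is where an error would most easily creep in; the multiplicativity computation, by contrast, is a routine ``slide the central element across $\ox_A$'' argument once the convention for $f_T$ in~(\ref{eq:1})--(\ref{eq:2}) is fixed.
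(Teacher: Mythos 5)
Your proposal is correct and follows essentially the same route as the paper: reduce to invertible bimodules via Proposition~\ref{prop:deq_as_meq}, prove multiplicativity from the defining relation $mr=f_M(r)m$ by sliding the central element across $\ox_A$, and identify the image via Proposition~\ref{prop:brauereq} together with Lemma~\ref{lem:auts}/Corollary~\ref{cor:antieau}. The only difference is presentational: where the paper merely cites those results for the image, you spell out the reverse inclusion explicitly (producing $M$ from $[\phi_\ast A]=[A]$ and verifying $f_M=\bar{\phi}^{-1}$, hence $\phi^M=\phi$), which is precisely the computation the paper leaves implicit.
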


\begin{proof}
We first show that $\Phi$ is a group map.  By Proposition~\ref{prop:deq_as_meq} it suffices to check that (a) $\phi^M=\phi^{M'}$ when $M\cong M'$ and (b) $\phi^{M\ox N}=\phi^M\phi^N$ for invertible $A$-bimodules $M$, $M'$ and $N$.  Claim (a) follows from the fact that for any bimodule isomorphism $\sigma:M\to M'$ we will have an $R$-bimodule isomorphism
\[
\mathrm{ad}_\sigma:\End_{B\ox A^{op}}(M)\to\End_{B\ox A^{op}}(M').
\]
\par

Let $f_?\in \Aut(R)$ be such that $\phi^?=\Spec(f_?^{-1})$.  Claim (b) follows from the description of $f_M$ as the unique algebra automorphism so that $mr=f_M(r)m$ for each $m\in M$ and $r\in R$.  Indeed, for any monomial $m\ox n\in M\ox N$ we have
\[
(m\ox n)r=m\ox (nr)=(mf_N(r))\ox n=f_Mf_N(r)(m\ox n).
\]
This implies
\[
\phi^{M\ox N}=\Spec((f_Mf_N)^{-1})=\Spec(f_N^{-1}f_M^{-1})=\Spec(f_M^{-1})\Spec(f_N^{-1})=\phi^M\phi^N.
\]
By Proposition~\ref{prop:brauereq} and Corollary~\ref{cor:antieau} the collection of automorphisms of the form $\phi^T$ in $\Aut(X)$ is exactly the collection of automorphisms $\phi$ so that $A$ is Brauer equivalent $\phi_\ast A$, i.e. the stabilizer of the Brauer class of $A$.
\end{proof}

Recall that for any Azumaya algebra $A$ there is a canonical algebra isomorphism
\[
A\ox_R A^{op}\overset{\cong}\to \mathrm{End}_R(A)
\]
given by multiplication on the right and left.  Since $A$ is finitely generated and projective over $R$ we have then the canonical Morita equivalence
\[
A\ox_R-:R\text{-}\mathrm{mod}\to \mathrm{End}_R(A)\text{-}\mathrm{mod}=A\ox_R A^{op}\text{-}\mathrm{mod}.
\]
This Morita equivalence is one of monoidal categories, where we take on the domain the product $\ox_R$ and on the codomain the product $\ox_A$.  Indeed, we have the obvious natural isomorphisms
\[
(A\ox_R M)\ox_A(A\ox_R N)\to A\ox_R(M\ox_R N),\ \ (a\ox m)\ox (b\ox n)\mapsto ab\ox(m\ox n).
\]
(See~\cite[Theorem 2.3.2]{caenepeel98}.)
\par

\begin{definition}
Let $\DPic_R(A)$ denote the group of isoclasses of invertible objects in $\left(D^b(A\ox_R A^{op}),\ox^\L_A\right)$.
\end{definition}

Note that objects in $\DPic_R(A)$ will still be tilting complexes, and hence still shifts of invertible bimodules.  These bimodules will necessarily be $R$-central.

\begin{lemma}\label{lem:314}
There is an exact sequence
\[
1\to \DPic_R(A)\to \DPic(A)\overset{\Phi}\to \Aut(X)_{[A]}\to 1.
\]
\end{lemma}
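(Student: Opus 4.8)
The surjectivity of $\Phi$ onto $\Aut(X)_{[A]}$ is exactly the content of the preceding lemma, so the plan is to construct the injection on the left and then identify its image with $\ker\Phi$. The natural candidate for the map $\DPic_R(A)\to\DPic(A)$ is restriction of scalars along the canonical surjection $A\ox A^{op}\to A\ox_R A^{op}$. First I would check that this is a well-defined group homomorphism: if $T$ is invertible in $(D^b(A\ox_R A^{op}),\ox_A^\L)$ with inverse $T^\vee$, then the isomorphisms $T\ox_A^\L T^\vee\cong A$ and $T^\vee\ox_A^\L T\cong A$ only involve the left and right $A$-module structures, which are untouched by restriction of scalars, so they persist in $D^b(A\ox A^{op})$. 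Hence $T$ becomes a tilting complex, and since restriction of scalars is monoidal for $\ox_A^\L$, the assignment is a group map.

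For injectivity I would reduce, via Proposition~\ref{prop:deq_as_meq} together with the remark that objects of $\DPic_R(A)$ are shifts of $R$-central invertible bimodules, to the level of honest bimodules and sections of the constant sheaf. The key observation is that any morphism of $A\ox A^{op}$-modules between two $R$-central bimodules automatically commutes with the two coinciding $R$-actions, and is therefore already a morphism of $A\ox_R A^{op}$-modules. Consequently, if $\Sigma^n M$ and $\Sigma^{n'}M'$ with $M,M'$ $R$-central become isomorphic in $D^b(A\ox A^{op})$, then matching cohomology degree by degree (Lemma~\ref{lem:00}) forces $n=n'$, and the resulting bimodule isomorphism $M\cong M'$ is automatically $R$-central. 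Thus $\Sigma^n M\cong\Sigma^{n'}M'$ already in $D^b(A\ox_R A^{op})$, which gives injectivity.

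The heart of the matter is identifying the image with $\ker\Phi$, and here I would lean on the description of $f_M$ recorded in the proof of the previous lemma: $f_M$ is the unique algebra automorphism with $mr=f_M(r)m$ for all $m\in M$ and $r\in R$. Writing $T=\Sigma^n M$ as in Proposition~\ref{prop:deq_as_meq}, the condition $\phi^T=\Spec(f_M^{-1})=\mathrm{id}$ is equivalent to $f_M=\mathrm{id}$, that is, to $mr=rm$ for all $m,r$, which says precisely that $M$ is $R$-central. For one inclusion, every class in the image of $\DPic_R(A)$ is a shift of an $R$-central invertible bimodule, so $f_M=\mathrm{id}$ and the class lies in $\ker\Phi$. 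Conversely, if $\phi^T=\mathrm{id}$ then $M$ is $R$-central; its dual $M^\vee$ has $f_{M^\vee}=f_M^{-1}=\mathrm{id}$ and is likewise $R$-central, and the bimodule isomorphisms $M\ox_A M^\vee\cong A$, $M^\vee\ox_A M\cong A$ are $R$-central, so $M$ is invertible in the $R$-relative category and $T$ lies in the image.

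The step I expect to be most delicate is the well-definedness and injectivity of the left-hand embedding: one must be certain that passing from the $R$-relative derived category to the absolute one neither destroys invertibility nor collapses distinct classes together. Both worries dissolve once everything is reduced, through Proposition~\ref{prop:deq_as_meq}, to shifts of invertible bimodules, since the comparison of $\Hom$-groups for $R$-central modules is then an elementary check. The remaining verifications, namely the monoidality of restriction of scalars and the preservation of the shift section, are routine.
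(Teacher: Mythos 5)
Your proof is correct and follows essentially the same route as the paper: surjectivity from the preceding lemma, reduction to shifts of invertible bimodules via Proposition~\ref{prop:deq_as_meq}, and the identification of $\ker\Phi$ with (shifts of) $R$-central bimodules using the fact that $A\ox_R A^{op}$-modules form a full subcategory of $A\ox A^{op}$-modules. The paper's own proof is just a two-sentence sketch of exactly these points; your write-up supplies the details (well-definedness and injectivity of the embedding, and the equivalence $\phi^T=\mathrm{id}_X \Leftrightarrow f_M=\mathrm{id} \Leftrightarrow M$ is $R$-central) that it leaves implicit.
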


\begin{proof}
An $R$-central bimodule is the same thing as a $A\ox_R A^{op}$-module.  So the result follows from the fact that $\phi^T=id_X$ if and only if $T$ is (isomorphic to) a complex of $R$-central bimodules, using Proposition~\ref{prop:deq_as_meq}.
\end{proof}

\begin{lemma}
There is a group isomorphism $A\ox_R^\L-:\DPic_R(R)\to \DPic_R(A)$.
\end{lemma}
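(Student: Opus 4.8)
The plan is to upgrade the monoidal Morita equivalence recorded just above the lemma to a \emph{derived} monoidal equivalence, and then to invoke the general principle that any monoidal equivalence restricts to a group isomorphism between the groups of isoclasses of invertible objects.

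First I would unwind the definitions. Since $R$ is commutative we have $R\ox_R R^{op}=R$, so $\DPic_R(R)$ is the group of invertible objects in $(D^b(R),\ox_R^\L)$, with tensor unit $R$; on the target side the unit is $A$, and $A\ox_R^\L R\cong A$, so the functor carries unit to unit. Because $A$ is finitely generated and projective over $R$ it is flat, whence $A\ox_R^\L-=A\ox_R-$ is exact; via the Morita equivalence $A\ox_R-:R\text{-mod}\overset{\sim}\to A\ox_R A^{op}\text{-mod}$ it is an equivalence of abelian categories. An exact equivalence of abelian categories induces a triangulated equivalence of the associated bounded derived categories, so $A\ox_R^\L-:D^b(R)\overset{\sim}\to D^b(A\ox_R A^{op})$ is a triangulated equivalence.

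The next step is to check derived monoidality. The key observation is that for any flat $R$-module $P$ the bimodule $A\ox_R P$ is flat over $A$: one has a natural isomorphism $(A\ox_R P)\ox_A N\cong P\ox_R N$ in the $A$-module $N$, and this functor is exact precisely because $P$ is flat over $R$. Consequently $A\ox_R-$ carries a flat resolution of an $R$-complex $M$ to a complex of $A$-flat bimodules, which may be used to compute $\ox_A^\L$. Feeding such resolutions into the abelian monoidal isomorphism $(A\ox_R M)\ox_A(A\ox_R N)\cong A\ox_R(M\ox_R N)$ promotes it to a natural isomorphism
\[
(A\ox_R^\L M)\ox_A^\L(A\ox_R^\L N)\cong A\ox_R^\L(M\ox_R^\L N)
\]
in $D^b(A\ox_R A^{op})$. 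Together with the unit identification, this exhibits $A\ox_R^\L-$ as a monoidal equivalence.

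Finally, a monoidal functor sends an invertible object to an invertible object and respects tensor products and the unit, so a monoidal \emph{equivalence} induces a bijection on isoclasses of invertibles which is a group homomorphism, hence a group isomorphism; applied here this yields the asserted isomorphism $A\ox_R^\L-:\DPic_R(R)\to\DPic_R(A)$. The only genuinely non-formal point is the passage from the abelian monoidal isomorphism to its derived counterpart, i.e. the flatness bookkeeping ensuring that $A\ox_R-$ is compatible with the computation of $\ox_A^\L$; I expect this to be the main obstacle, while the remaining steps are direct applications of the Morita equivalence recorded above.
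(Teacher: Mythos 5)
Your argument is correct, but it takes a genuinely different route from the paper's. The paper's proof is a two-line reduction to the abelian level: by the discussion preceding the lemma, the Morita equivalence $A\ox_R-$ gives a bijection between isoclasses of invertible $R$-modules and isoclasses of invertible $A\ox_R A^{op}$-modules, and the structure theory already established (Proposition~\ref{prop:deq_as_meq}, together with the remark that objects of $\DPic_R(A)$ are tilting complexes and hence shifts of $R$-central invertible bimodules) says every element of $\DPic_R(R)$ and of $\DPic_R(A)$ is a shift $\Sigma^n$ of such a module; the module-level bijection therefore extends directly to the derived Picard groups. You never invoke this structure theorem. Instead you promote the monoidal Morita equivalence to a monoidal triangulated equivalence $D^b(R)\simeq D^b(A\ox_R A^{op})$ — using exactness of $A\ox_R-$, the observation that $A\ox_R P$ is $A$-flat when $P$ is $R$-flat (so flat resolutions are carried to complexes suitable for computing $\ox_A^\L$, which is exactly the kind of flatness bookkeeping the paper's standing assumptions in Section~\ref{sect:deq} are designed to permit), and the termwise monoidal isomorphism — and then apply the formal principle that a monoidal equivalence induces an isomorphism on groups of isoclasses of invertible objects. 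What each approach buys: yours is self-contained and more general, in that it bypasses the classification of tilting complexes over Azumaya algebras entirely (no appeal to the local structure results of Yekutieli underlying Proposition~\ref{prop:deq_as_meq}) and actually proves the stronger statement that the two monoidal derived categories are equivalent; the paper's is essentially free given work it needs anyway, and it has the practical advantage of exhibiting every element of $\DPic_R(A)$ explicitly in the form $\Sigma^n(A\ox_R L)$, which is precisely what is used immediately afterwards to rewrite the exact sequence of Lemma~\ref{lem:314} with $\Gamma(X,\underline{\mathbb{Z}})\times\Pic(X)$ in place of $\DPic_R(A)$.
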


\begin{proof}
The above discussion tells us that we have a group isomorphism
\[
A\ox_R-:\frac{\{\mathrm{invertible\ }R\text{-}\mathrm{modules}\}}{\cong}\to \frac{\{\mathrm{invertible\ }A\ox_R A^{op}\text{-}\mathrm{modules}\}}{\cong}
\]
and the result with $\DPic$ follows from the fact that any object in $\DPic_R(R)$ or $\DPic_R(A)$ is the shift of an invertible bimodule.
\end{proof}

Note that the global sections functor gives a group isomorphism
\[
\Gamma:\Pic(X)\overset{\cong}\to \frac{\{\mathrm{invertible\ }R\text{-}\mathrm{modules}\}}{\cong}.
\]
By way of this isomorphism we identify $\Pic(X)$ with the collection of invertible $R$-modules, and get a canonical isomorphism $\Gamma(X,\underline{\mathbb{Z}})\times\Pic(X)\to \DPic_R(R)$ from Proposition~\ref{prop:deq_as_meq}.  So the exact sequence of Lemma~\ref{lem:314} can be rewritten as an exact sequence
\[
1\to \Gamma(X,\underline{\mathbb{Z}})\times\Pic(X)\overset{A\ox_R^\L-}\to \DPic(A)\overset{\Phi}\to \Aut(X)_{[A]}\to 1.
\]
We now give our description of $\DPic(A)$.

\begin{theorem}\label{thm:dpicaz}
There is a 2-cocycle $\alpha:\Aut(X)_{[A]}\times\Aut(X)_{[A]}\to \Pic(X)$, dependent on the Brauer class of $A$, and a group isomorphism
\[
\DPic(A)\cong \left(\Gamma(X,\underline{\mathbb{Z}})\times\Pic(X)\right)\rtimes_\alpha \Aut(X)_{[A]}.
\]
\end{theorem}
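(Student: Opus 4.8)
The plan is to read off the extension data directly from the short exact sequence
\[
1\to \Gamma(X,\underline{\mathbb{Z}})\times\Pic(X)\overset{A\ox_R^\L-}\to \DPic(A)\overset{\Phi}\to \Aut(X)_{[A]}\to 1
\]
established just above, and to invoke the standard classification of group extensions with abelian kernel. Writing $N:=\Gamma(X,\underline{\mathbb{Z}})\times\Pic(X)$ and $Q:=\Aut(X)_{[A]}$, the kernel $N$ is abelian, so the extension is determined up to isomorphism by the conjugation action of $Q$ on $N$ together with a class in $H^2(Q,N)$. Concretely, any set-theoretic section $s$ of $\Phi$ produces the action $\phi\cdot n=s(\phi)\,n\,s(\phi)^{-1}$ and the cocycle $\alpha(\phi,\psi)=s(\phi)s(\psi)s(\phi\psi)^{-1}$, and the goal is to choose $s$ so that this data takes the advertised form: the natural action on $N$, and a cocycle landing entirely in the $\Pic(X)$-factor.

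First I would build the section. By Proposition~\ref{prop:brauereq} together with surjectivity of $\Phi$, for each $\phi\in\Aut(X)_{[A]}$ the algebra $\phi_\ast A$ is Brauer equivalent to $A$, so there is an invertible $A\ox_R(\phi_\ast A)^{op}$-module $M_\phi$, which we regard via Lemma~\ref{lem:auts} as an invertible $A$-bimodule in degree $0$ with $\phi^{M_\phi}=\phi$. Setting $s(\phi)=[M_\phi]$, normalized so that $M_{\mathrm{id}}=A$, gives a section of $\Phi$ by honest bimodules, that is, with vanishing constant-sheaf shift.

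Next I would compute the two pieces of data. For the cocycle: $M_\phi\ox_A M_\psi$ and $M_{\phi\psi}$ are both invertible $A$-bimodules in degree $0$ inducing the same automorphism $\phi\psi$, so $M_\phi\ox_A M_\psi\ox_A M_{\phi\psi}^\vee$ is an invertible $R$-central bimodule concentrated in degree $0$, necessarily of the form $A\ox_R\alpha(\phi,\psi)$ for a unique $\alpha(\phi,\psi)\in\Pic(X)$; the $2$-cocycle identity is then associativity of $\ox_A$. Because the section uses no shifts, $\alpha$ has trivial $\Gamma(X,\underline{\mathbb{Z}})$-component, which is precisely why it is $\Pic(X)$-valued. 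For the action, I would conjugate a kernel element $\Sigma^n(A\ox_R L)$ by $M_\phi$ and verify
\[
M_\phi\ox_A \Sigma^n(A\ox_R L)\ox_A M_\phi^\vee\cong \Sigma^{\phi\cdot n}\big(A\ox_R(\bar\phi_\ast L)\big),
\]
so that the action is the product of the pushforward permutation of components on $\Gamma(X,\underline{\mathbb{Z}})$ and the pushforward on $\Pic(X)$; since $N$ is abelian this action is independent of the choice of $M_\phi$, and in particular independent of $[A]$.

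The main obstacle is this last computation: tracking the non-$R$-central structure of $M_\phi$, encoded by $mr=f_{M_\phi}(r)m$, through the conjugation $M_\phi\ox_A(-)\ox_A M_\phi^\vee$ to confirm both that an $R$-central input is twisted exactly by $\bar\phi$, yielding the pushforward on $\Pic(X)$, and that the constant-sheaf shift $\Sigma^n$ is transported to $\Sigma^{\phi\cdot n}$. Once these are in hand, the classification theorem identifies $\DPic(A)$ with $\left(\Gamma(X,\underline{\mathbb{Z}})\times\Pic(X)\right)\rtimes_\alpha\Aut(X)_{[A]}$, and the fact that only the $\Pic(X)$-component of the cocycle survives is exactly the content of the shift-free section.
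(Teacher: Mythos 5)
Your proposal is correct and follows essentially the same route as the paper: both start from the exact sequence $1\to \Gamma(X,\underline{\mathbb{Z}})\times\Pic(X)\to \DPic(A)\overset{\Phi}\to \Aut(X)_{[A]}\to 1$, compute that conjugation by a tilting complex acts on the kernel by pushforward, invoke the standard classification of extensions with abelian kernel, and obtain the $\Pic(X)$-valuedness of the cocycle from the fact that every $\phi\in\Aut(X)_{[A]}$ can be represented by an honest (shift-free) invertible $A\ox A^{op}$-bimodule. Your write-up merely makes explicit the set-theoretic section and the formula $\alpha(\phi,\psi)=s(\phi)s(\psi)s(\phi\psi)^{-1}$, which the paper leaves implicit in its citation of Weibel, and the conjugation computation you flag as the remaining obstacle is exactly the short calculation $T\ox_A(A\ox_R\Sigma^nL)\ox_A T^\vee\cong A\ox_R\phi^T_\ast(\Sigma^nL)$ carried out in the paper.
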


The semidirect product here is produced with respect to the action of $\Aut(X)$ on $\Gamma(X,\underline{\mathbb{Z}})\times\Pic(X)$ given by pushforward 
\[
\phi\cdot (\Sigma^nL)=\phi_\ast (\Sigma^nL)=\Sigma^{n\phi^{-1}}(\phi_\ast L),
\]
for $\phi\in \Aut(X)$, $L$ in $\Pic(X)$, and $n\in\Gamma(X,\underline{\mathbb{Z}})$.  Note that $\Aut(X)$ does {\it not} act trivially on sections of the constant sheaf.

\begin{proof}
Let $T$ be a shift of an invertible bimodule and take $f_T\in \Aut(R)$ such that $\phi^T=\Spec(f_T^{-1})$.  To ease notation we take $\ox=\ox^\L$.  Then we have for any $L$ in $\Pic(X)$ and $n\in\Gamma(X,\underline{\mathbb{Z}})$
\[
T\ox_A(A\ox_R \Sigma^n L)\cong T\ox_R (\Sigma^n L)\cong (\Sigma^{n'}L')\ox_R T,
\]
where $L'$ is $L$ with the new $R$-action given by $r\cdot l=f_T^{-1}(r)l$ for each $r\in R$ and $l\in L$, and if $\Sigma^nL=\Sigma^{n(\lambda)}e_\lambda L$ then
\[
\Sigma^{n'}L'=\Sigma^{n(\lambda)}f_T(e_\lambda) L'. 
\]
Rather, $L'=\phi_\ast^T L$ and $\Sigma^{n'}=\Sigma^{{n(\phi^T)^{-1}}}$.  Therefore
\[
T\ox_A(A\ox_R \Sigma^nL)\ox_A T^\vee\cong \phi^T_\ast(\Sigma^n L)\ox_R (T\ox_A T^\vee)\cong A\ox_R \phi^T_\ast(\Sigma^n L).
\]
So conjugation corresponds to the pushforward action of 
\[
\Aut(X)_{[A]}\cong \DPic(A)/(\Gamma(X,\underline{\mathbb{Z}})\times\Pic(X))
\]
on $\Gamma(X,\underline{\mathbb{Z}})\times\Pic(X)$.  Since $\Gamma(X,\underline{\mathbb{Z}})\times\Pic(X)$ is abelian, it follows that there is $\Gamma(X,\underline{\mathbb{Z}})\times\Pic(X)$-valued cocycle $\alpha$ with
\[
\DPic(A)=(\Gamma(X,\underline{\mathbb{Z}})\times\Pic(X))\rtimes_\alpha \Aut(X)_{[A]}
\]
(see~\cite[Theorem 6.6.3]{weibel}).  However, since we can always represent an automorphism $\phi\in\Aut(X)_{[A]}$ by an invertible $A\ox A^{op}$-module the cocycle $\alpha$ can in fact be taken to be $\Pic(X)$-valued.
\end{proof}

It is not our opinion that the cocycle $\alpha$ vanishes in general (although it may).  There seems to be a general lack of tractable but nontrivial examples from which we can study this problem.  Whence we ask

\begin{question}
Is there an example of an Azumaya algebra $A$ for which the cocycle $\alpha$ does {\it not} vanish?  If so, what is the content of the associated class $\bar{\alpha}\in H^2(\Aut(X)_{[A]},\Pic(X))$?
\end{question}

One case in which we do know that $\alpha$ vanishes is the case $A=R$.  Here we will have the canonical section of $\Phi$
\[
\Aut(X)=\Aut(X)_{[R]}\to \DPic(R),\ \ \phi\mapsto {_{\bar{\phi}}R},
\]
where $\bar{\phi}$ is the dual ring map to $\phi$ and $_{\bar{\phi}}R$ is the bimodule $R$ with the left action twisted by $\bar{\phi}$.  In this case we get a description of the derived Picard group of a commutative algebra which refines a well established result of Yekutieli, as well as Rouquier-Zimmermann~\cite[Proposition 3.5]{yekutieli99},\cite[Theorem 2.11]{rouquierzimmerman03},\cite[Theorem 6.10]{yekutieli15}.

\begin{corollary}\label{cor:dpiccom}
For any commutative algebra $R$, with $X=\Spec(R)$, we have
\[
\DPic(R)\cong \left(\Gamma(X,\underline{\mathbb{Z}})\times \Pic(X)\right)\rtimes \Aut(X).
\]
\end{corollary}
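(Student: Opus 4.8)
The plan is to derive the corollary as the special case $A=R$ of Theorem~\ref{thm:dpicaz}, the only genuine additional work being to show that the $2$-cocycle $\alpha$ vanishes. First I would observe that the trivial Azumaya algebra $R$ has trivial Brauer class and that $[R]$ is fixed by every $\phi\in\Aut(X)$: the pushforward $\phi_\ast R$ is $R$ with structure map $\bar\phi$, and $\bar\phi^{-1}$ is an $R$-algebra isomorphism $\phi_\ast R\overset{\cong}\to R$, so $[\phi_\ast R]=[R]$. Hence $\Aut(X)_{[R]}=\Aut(X)$, and the exact sequence preceding Theorem~\ref{thm:dpicaz} reads
\[
1\to \Gamma(X,\underline{\mathbb{Z}})\times\Pic(X)\overset{R\ox_R^\L-}\longrightarrow \DPic(R)\overset{\Phi}\longrightarrow \Aut(X)\to 1.
\]

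The key step is to exhibit a group-theoretic section of $\Phi$, which forces the extension to split as an ordinary semidirect product and thereby makes $\alpha$ a coboundary. I would take the map $s:\Aut(X)\to\DPic(R)$, $\phi\mapsto [{}_{\bar\phi}R]$, where $\bar\phi\in\Aut(R)$ is the ring automorphism with $\phi=\Spec(\bar\phi)$ and ${}_{\bar\phi}R$ denotes $R$ as an $R$-bimodule with right action the usual multiplication and left action twisted by $\bar\phi$. Each ${}_{\bar\phi}R$ is an invertible bimodule, with inverse ${}_{\bar\phi^{-1}}R$, so $s$ lands in $\DPic(R)$.

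Two verifications remain, both of which I expect to be routine once the conventions are pinned down. First, that $s$ is a homomorphism: tracing the balanced tensor product gives ${}_{\bar\phi}R\ox_R {}_{\bar\psi}R\cong {}_{\bar\psi\bar\phi}R={}_{\overline{\phi\psi}}R$, where the contravariance of $\Spec$ (so that $\overline{\phi\psi}=\bar\psi\bar\phi$) matches the composition order. Second, that $\Phi\circ s=\mathrm{id}$: using the characterization of $f_M$ as the unique automorphism with $mr=f_M(r)m$, for $M={}_{\bar\phi}R$ the relation becomes $mr=\bar\phi(f_M(r))m$, which forces $f_M=\bar\phi^{-1}$ and hence $\phi^M=\Spec(f_M^{-1})=\Spec(\bar\phi)=\phi$.

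With the section in hand, the splitting is compatible with the conjugation action already computed in the proof of Theorem~\ref{thm:dpicaz}, so $\alpha$ can be taken to be trivial and $\DPic(R)\cong\left(\Gamma(X,\underline{\mathbb{Z}})\times\Pic(X)\right)\rtimes\Aut(X)$. The only real subtlety, and hence the step I would treat most carefully, is bookkeeping the several contravariant identifications ($\phi\leftrightarrow\bar\phi$, $f_M\leftrightarrow\phi^M$, and the sidedness of the twist on ${}_{\bar\phi}R$) so that the homomorphism property and the splitting are oriented consistently; once these are fixed the argument is purely formal.
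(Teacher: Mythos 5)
Your proposal is correct and takes essentially the same route as the paper: the paper also deduces the corollary from Theorem~\ref{thm:dpicaz} by observing that $\Aut(X)_{[R]}=\Aut(X)$ and that the canonical section $\phi\mapsto {_{\bar{\phi}}R}$ of $\Phi$ splits the extension, forcing the cocycle $\alpha$ to vanish. Your explicit verifications (that ${_{\bar{\phi}}R}\ox_R {_{\bar{\psi}}R}\cong {_{\overline{\phi\psi}}R}$ and that $\Phi\circ s=\mathrm{id}$, with the contravariance conventions tracked correctly) are exactly the bookkeeping the paper leaves implicit.
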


\subsection{Local rings and CSAs}
\label{sect:CSA}

Let $\mathrm{Out}(A)$ denote the group of outer automorphisms for $A$, i.e. automorphisms modulo inner automorphisms.  In the case in which $R$ is local the restriction map
\[
\Out(A)\to \mathrm{Aut}(R),\ \ f\mapsto f|R
\]
is an inclusion, by Skolem-Noether~\cite[Proposition 1.4]{milne80}.  One can show that, in fact, this embedding gives an canonical isomorphism $\Out(A)\cong \mathrm{Aut}(R)_{[A]}$.  Note also that the Picard group of $X=\Spec(R)$ vanishes.  Also $\Gamma(X,\underline{\mathbb{Z}})=\mathbb{Z}$ since $X$ is connected.  So we get in this case
\[
\DPic(A)\cong \mathbb{Z}\times\Out(A).
\]
This applies in particular to the case of a central simple algebra over a field.

\begin{proposition}\label{prop:numberfields}
Suppose $k$ is a field and $K$ is a finite field extension of $k$.  (For example, take a number field over $\mathbb{Q}$.)  Then for any central simple algebra $A$ over $K$ we have $\DPic(A)\cong \DPic(K)$ if and only if the restriction map $\Out(A)\to \Aut(K)$ is an isomorphism.
\end{proposition}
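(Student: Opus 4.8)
The plan is to reduce the whole statement to the computation $\DPic(A)\cong \mathbb{Z}\times\Out(A)$ recorded just above for central simple algebras, together with the fact that $\Aut(K)=\Aut_{k\text{-}\mathrm{alg}}(K)$ is a \emph{finite} group when $K/k$ is a finite extension. Specializing the preceding discussion to $R=K$: since $K$ is a field, $X=\Spec(K)$ is connected with $\Pic(X)=0$ and $\Gamma(X,\underline{\mathbb{Z}})=\mathbb{Z}$, so $\DPic(A)\cong \mathbb{Z}\times\Out(A)$, and applying the same identity with $A=K$ (where every inner automorphism is trivial) gives $\DPic(K)\cong \mathbb{Z}\times\Aut(K)$. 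Recall also that the restriction map $\Out(A)\to\Aut(K)$ is injective by Skolem--Noether, with image the stabilizer $\Aut(K)_{[A]}$; hence it is an isomorphism precisely when $\Aut(K)_{[A]}=\Aut(K)$.

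The ``if'' direction is then immediate: an isomorphism $\Out(A)\overset{\sim}\to\Aut(K)$ yields $\DPic(A)\cong \mathbb{Z}\times\Out(A)\cong \mathbb{Z}\times\Aut(K)\cong \DPic(K)$.

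For the ``only if'' direction I would extract from $\mathbb{Z}\times\Out(A)\cong \mathbb{Z}\times\Aut(K)$ the numerical fact that $|\Out(A)|=|\Aut(K)|$. The key input is that $\Aut(K)$, and a fortiori its subgroup $\Out(A)$, is finite because $K/k$ is finite. In a direct product $\mathbb{Z}\times G$ with $G$ finite, the set of torsion elements is exactly the characteristic subgroup $\{0\}\times G\cong G$, since $(n,g)$ has finite order if and only if $n=0$. Any abstract group isomorphism carries torsion elements to torsion elements, so an isomorphism $\DPic(A)\cong\DPic(K)$ restricts to an isomorphism of torsion subgroups $\Out(A)\cong\Aut(K)$; in particular the two finite groups have equal order. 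As the restriction map injects $\Out(A)$ into the finite group $\Aut(K)$ and the orders agree, it is forced to be surjective, hence an isomorphism.

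The main obstacle is precisely this ``only if'' direction: one must resist concluding that the restriction map is an isomorphism from an abstract isomorphism of the ambient derived Picard groups without first controlling the free $\mathbb{Z}$-factor. The torsion-subgroup argument disposes of this cleanly and, crucially, remains valid even when $\Aut(K)$ is non-abelian (as for a non-abelian Galois group), because the torsion elements of $\mathbb{Z}\times G$ still form a genuine subgroup whenever $\mathbb{Z}$ is a direct factor.
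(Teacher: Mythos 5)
Your proposal is correct and follows essentially the same route as the paper's own proof: both use the identifications $\DPic(A)\cong\mathbb{Z}\times\Out(A)$ and $\DPic(K)\cong\mathbb{Z}\times\Aut(K)$, recover $\Out(A)$ and $\Aut(K)$ as the torsion subgroups (which exist as genuine subgroups since both groups are finite for a finite extension $K/k$), and conclude that the Skolem--Noether injection $\Out(A)\to\Aut(K)$ between finite groups of equal order must be an isomorphism. Your explicit remark that torsion elements of $\mathbb{Z}\times G$ form a subgroup even for non-abelian $G$ is a nice point of care that the paper leaves implicit, but it does not change the argument.
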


\begin{proof}
In this case the group of $k$-algebra automorphism $\Aut(K)$ is finite, as is $\Out(A)$.  We can then recover the automorphisms of $K$ as the subgroup of torsion elements in $\DPic(K)$, and similarly find $\Out(A)$ as the torsion elements in $\DPic(A)$.  So, any isomorphism $\DPic(A)\cong \DPic(K)$ induces an isomorphism on torsion elements $\Out(A)\cong \Aut(K)$.  In particular, the two groups have the same order.  It follows that the restriction map $\Out(A)\to\Aut(K)$ is an isomorphism simply because it is an injection.  Conversely, if the restriction map is an isomorphism then $\DPic(A)$ is isomorphic to $\DPic(K)$ since $\DPic(A)=\mathbb{Z}\times \Out(A)$ and $\DPic(K)=\mathbb{Z}\times\Aut(K)$.
\end{proof}

The situation presented in the proposition gives us an idea of where to look for simple examples for which $\DPic(A)$ and $\DPic(R)$ are not isomorphic.  Consider the number field $K=\mathbb{Q}(\sqrt{2})$ and quaternion algebra 
\[
A=(-1,-\sqrt{2})=K\langle i,j\rangle/(i^2=-1,j^2=-\sqrt{2},ij=-ji),
\]
which is a central simple division algebra over $K$.  For $k=\mathbb{Q}$, one can check $\Aut(K)=\mathbb{Z}/2\mathbb{Z}$, $\Out(A)=\{1\}$, and 
\[
\DPic(A)=\mathbb{Z}\ \ \mathrm{while}\ \ \DPic(K)=\mathbb{Z}\times(\mathbb{Z}/2).
\]

\section{An analysis of Weyl algebras in finite characteristic}
\label{sect:Weyl}

Let $k$ be a field of finite characteristic $p$, and consider the Weyl algebra
\[
A_n(k)=k\langle x_1,\dots, x_{2n}\rangle/([x_i,x_j]-\delta_{i,j+n}+\delta_{i+n,j})_{i,j}.
\]
The Weyl algebra can be understood alternatively as the algebra of crystalline differential operators $\mathcal{D}_{\mathbb{A}^n_k}$ on the affine space $\mathbb{A}^n_k=\Spec(k[x_1,\dots,x_n])$, i.e. the universal enveloping algebra of the Lie algebroid of tangent vectors $T_{\mathbb{A}^n_k}$~\cite{rinehart63} (see also~\cite{bmr08}).  Under this identification each generator $x_{n+j}$, for $1\leq j\leq n$, is identified with the global vector field $\frac{\partial}{\partial x_j}\in \mathcal{D}_{\mathbb{A}^n_k}$.
\par

The Weyl algebra in characteristic $p$ is Azumaya over the commutative subalgebra
\[
Z_n(k):=\mathrm{Center}(A_n(k))=k[x_1^p,\dots, x_{2n}^p],
\]
which is in fact a polynomial ring~\cite{revoy73}.  We fix $\mathbb{A}^{2n}_k=\Spec(Z_n(k))$.

\begin{lemma}\label{lem:An0}
The classes $[A_n(k)]$ and $[Z_n(k)]=1_{\mathrm{Br}}$ are distinct in the Brauer group of $\mathbb{A}^{2n}_k$.  Furthermore, for any automorphism $\phi\in\Aut(\mathbb{A}^{2n}_k)$ we have $[\phi_\ast A_n(k)]\neq [Z_n(k)]$.
\end{lemma}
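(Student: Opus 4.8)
The plan is to pass to the generic point of $\mathbb{A}^{2n}_k$ and exhibit a division algebra there; triviality of a Brauer class is preserved under this base change, so a nonsplit generic fibre forces $[A_n(k)]\neq 1_{\mathrm{Br}}$, and the ``furthermore'' clause will follow because $\phi_\ast A_n(k)$ shares all the relevant properties.

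First I would recall the two structural facts I need about the Weyl algebra: it is a domain (its standard filtration has polynomial associated graded), and it is free of rank $p^{2n}$ over $Z_n(k)$, with the monomials $x_1^{a_1}\cdots x_{2n}^{a_{2n}}$, $0\le a_i\le p-1$, as a basis. Write $F=\mathrm{Frac}(Z_n(k))=k(x_1^p,\dots,x_{2n}^p)$ and let $S=Z_n(k)\setminus\{0\}$, a central multiplicative set in $A_n(k)$. Then $A_n(k)\otimes_{Z_n(k)}F=S^{-1}A_n(k)$ is simultaneously (i) the base change to the field $F$ of an Azumaya algebra, hence a central simple $F$-algebra of dimension $p^{2n}$, and (ii) a localization of the domain $A_n(k)$ at a central multiplicative set, hence again a domain. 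A central simple algebra that is a domain must be a division algebra, and since its dimension $p^{2n}$ exceeds $1$ it is a nontrivial one; thus its class in $\mathrm{Br}(F)$ is nonzero.

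Now suppose for contradiction that $[A_n(k)]=1_{\mathrm{Br}}$ in $\mathrm{Br}(\mathbb{A}^{2n}_k)$. Base change along $Z_n(k)\to F$ carries trivial Brauer classes to trivial Brauer classes (concretely $\End_{Z_n(k)}(P)\otimes_{Z_n(k)}F\cong\End_F(P\otimes_{Z_n(k)}F)$ is split), so $[A_n(k)\otimes_{Z_n(k)}F]=1$ in $\mathrm{Br}(F)$, contradicting the previous paragraph. Hence $[A_n(k)]\neq 1_{\mathrm{Br}}$. For the furthermore clause, $\phi_\ast A_n(k)$ is the same ring as $A_n(k)$ with only the central embedding $Z_n(k)\hookrightarrow A_n(k)$ precomposed with $\bar\phi$; it is therefore still a domain, still Azumaya over $Z_n(k)$, and still of rank $p^{2n}$ (an automorphism preserves rank), so the identical argument gives $[\phi_\ast A_n(k)]\neq 1_{\mathrm{Br}}$. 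Equivalently, pushforward by $\phi$ is a bijection of $\mathrm{Br}(\mathbb{A}^{2n}_k)$ fixing the trivial class, so $[\phi_\ast A_n(k)]=1$ would force $[A_n(k)]=1$.

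I expect the only point requiring care to be the verification in (ii) that inverting the central nonzero elements of $A_n(k)$ introduces no zero divisors: for central $s,u\in S$ one has $(s^{-1}a)(u^{-1}b)=(su)^{-1}ab$, which vanishes only when $ab=0$, i.e. only when $a=0$ or $b=0$. Everything else is the definition of the Brauer class together with the ranks recorded above.
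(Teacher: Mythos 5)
Your proposal is correct, and it runs on the same engine as the paper's proof --- the fact that $A_n(k)$ is a domain --- but it packages the contradiction differently. The paper stays integral: assuming $[A_n(k)]=1_{\mathrm{Br}}$, it invokes Morita theory together with Proposition~\ref{prop:brauereq} to produce a finitely generated projective $Z_n(k)$-module $V$ and an algebra isomorphism $A_n(k)\cong\End_{Z_n(k)}(V)$, then localizes at an element making $V$ free of rank $p^n>1$, so that the right-hand side acquires nilpotents while the left-hand side, being a central localization of a domain, has none. You instead base change all the way to the generic point $F=\mathrm{Frac}(Z_n(k))$, observe that $A_n(k)\ox_{Z_n(k)}F$ is simultaneously a central simple $F$-algebra and a domain, hence a division algebra of dimension $p^{2n}>1$, and conclude via Wedderburn that its class in $\mathrm{Br}(F)$ is nontrivial; since base change carries split algebras to split algebras, $[A_n(k)]$ cannot be trivial upstairs. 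What the paper's route buys is economy within its own framework --- it reuses the bimodule characterization of Brauer equivalence (Schack's theorem) already established in Section~\ref{sect:brauer} and finishes in three sentences; what your route buys is independence from that machinery, using only the functoriality of the Brauer construction under $Z_n(k)\to F$ and standard facts about central simple algebras, and it isolates cleanly where the nontriviality lives (the generic fibre is a genuine division algebra). Your treatment of the ``furthermore'' clause also matches the paper's implicit one --- $\phi_\ast A_n(k)$ is the same ring, so the argument repeats verbatim --- and your alternative observation that $\phi_\ast$ is an automorphism of $\mathrm{Br}(\mathbb{A}^{2n}_k)$ fixing the identity class is an equally valid, arguably cleaner, way to dispatch it.
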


\begin{proof}
Suppose contrarily that $[A_n(k)]=[Z_n(k)]$.  Then by Morita theory and Proposition~\ref{prop:brauereq} there will be some vector bundle (finitely generated projective module) $V$ over $Z_n(k)$ and an algebra isomorphism $f:A_n(k)\to \End_{Z_n(k)}(V)$.  However, after localizing if necessary, the algebra $\End_{Z_n(k)}(V)$ will have nilpotent elements while $A_n(k)$ is well known to be a domain.  So this can not happen, and $[A_n(k)]\neq [Z_n(k)]$.  Similarly each $[\phi_\ast A_n(k)]\neq [Z_n(k)]$.
\end{proof}

For the Weyl algebra we have $\DPic(A_n(k))=\mathbb{Z}\times \Aut(\mathbb{A}^{2n}_k)_{[A_n(k)]}$ and a canonical embedding
\[
\DPic(A_n(k))\to \DPic(Z_n(k)),
\]
since $\mathbb{A}^{2n}_k$ is connected with vanishing Picard group.  

\subsection{Statements of the results}

We will provide the following result.

\begin{theorem}\label{thm:az}
When the characteristic of $k$ is not $2$ the embedding $\DPic(A_n(k))\to \DPic(Z_n(k))$ is not an isomorphism.
\end{theorem}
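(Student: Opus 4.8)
The plan is to convert the statement into a question about a stabilizer subgroup, and then to separate Brauer classes over the generic point of $\mathbb{A}^{2n}_k$. First I would make the reduction. Since $\mathbb{A}^{2n}_k$ is connected with $\Pic(\mathbb{A}^{2n}_k)=0$, Corollary~\ref{cor:dpiccom} gives $\DPic(Z_n(k))\cong\mathbb{Z}\times\Aut(\mathbb{A}^{2n}_k)$, while the discussion preceding the theorem records $\DPic(A_n(k))\cong\mathbb{Z}\times\Aut(\mathbb{A}^{2n}_k)_{[A_n(k)]}$; under these identifications the canonical embedding is the identity on the $\mathbb{Z}$-factor and the inclusion $\Aut(\mathbb{A}^{2n}_k)_{[A_n(k)]}\hookrightarrow\Aut(\mathbb{A}^{2n}_k)$ on the second. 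Hence the embedding is an isomorphism if and only if the stabilizer is all of $\Aut(\mathbb{A}^{2n}_k)$, and it suffices to produce a single $\phi\in\Aut(\mathbb{A}^{2n}_k)$ with $[\phi_\ast A_n(k)]\neq[A_n(k)]$ in $\mathrm{Br}(\mathbb{A}^{2n}_k)$.

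To make the classes computable I would pass to the function field $K=k(u_1,\dots,u_{2n})$, writing $u_j=x_j^p$ for the central coordinates. Because $\mathbb{A}^{2n}_k$ is regular, restriction gives an injection $\mathrm{Br}(\mathbb{A}^{2n}_k)\hookrightarrow\mathrm{Br}(K)$, so it is enough to separate the generic fibres. The generic fibre $A_n(k)\otimes_{Z_n(k)}K$ is a central division algebra of degree $p^n$ which, by the tensor decomposition $A_n(k)=A_1(k)^{\otimes n}$ along the symplectic pairs $(x_i,x_{n+i})$, is the product of $n$ degree-$p$ factors; by additivity of the generic class the comparison therefore reduces to the first pair. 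For that pair a direct computation in $A_1(k)$ shows that $g=x_1x_{n+1}$ acts as an Euler operator, so $g^p-g$ is central and equals $u_1u_{n+1}$, while $x_1gx_1^{-1}=g-1$ and $x_1^p=u_1$; thus the generic class is the Artin--Schreier symbol $p$-algebra attached to the pair $(u_1u_{n+1},u_1)$, which is nonzero since $A_1(k)$ is a domain.

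Finally I would exhibit a non-stabilizing $\phi$. Take the linear scaling that sends $u_1\mapsto\lambda u_1$ and fixes the other coordinates, with $\lambda\in k^\times$, $\lambda\neq1$. Pushforward twists the symbol on the first pair from $(u_1u_{n+1},u_1)$ to $(\lambda u_1u_{n+1},\lambda u_1)$, the higher pairs cancel, and under the differential symbol $[a,b)\mapsto a\,\tfrac{db}{b}$ the difference of the two classes is represented by the regular global $1$-form $(\lambda-1)\,u_{n+1}\,du_1\in\Gamma(\mathbb{A}^{2n}_k,\Omega^1)$. Since both $A_n(k)$ and $\phi_\ast A_n(k)$ are Azumaya on all of $\mathbb{A}^{2n}_k$, their classes, and hence this difference, are unramified along every divisor of $\mathbb{A}^{2n}_k$, which is why the difference is invisible to residues there; the substance of the argument is that the residual symbol $((\lambda-1)u_1u_{n+1},u_1)$ is nevertheless nonsplit in $\mathrm{Br}(K)$, as one sees from its wild ramification at the place $u_1=\infty$. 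Granting this, $[\phi_\ast A_n(k)]\neq[A_n(k)]$, so $\phi\notin\Aut(\mathbb{A}^{2n}_k)_{[A_n(k)]}$ and the embedding is not an isomorphism.

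The hard part will be this last nonvanishing: controlling the Artin--Schreier and Cartier relations finely enough to certify that $((\lambda-1)u_1u_{n+1},u_1)$ — equivalently the class of the regular form $(\lambda-1)\,u_{n+1}\,du_1$ in the quotient of $\Gamma(\Omega^1)$ that injects into $\mathrm{Br}(K)$ — is genuinely nontrivial, given that it represents a class unramified on affine space and so is invisible to the naive residue maps. This is precisely the ``analysis of the behaviour of the Brauer class'' advertised in the introduction, and it is where the hypothesis $\mathrm{char}\,k\neq2$ is consumed, since there it guarantees both a nontrivial scalar $\lambda\neq1$ and that the relevant symbol does not degenerate.
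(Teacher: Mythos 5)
Your reduction to producing a single $\phi$ with $[\phi_\ast A_n(k)]\neq[A_n(k)]$ is exactly the paper's, and your identification of the generic fibre of $A_n(k)$ with the Artin--Schreier symbol algebra $\sum_i[u_iu_{n+i},u_i)$ is correct (and attractive: additivity of symbols in the first slot is precisely the abstract content of the paper's explicit isomorphism in Lemma~\ref{lem:An2} and the cocycle identity of Lemma~\ref{lem:An3}). But the proposal has a genuine gap at the only step that carries content: the nonvanishing of the difference class. You explicitly defer it (``Granting this\dots'', ``The hard part will be this last nonvanishing''), gesturing at ``wild ramification at the place $u_1=\infty$''. For $p$-torsion classes in characteristic $p$ there is no tame residue/Faddeev sequence to invoke; certifying nonvanishing through ramification at infinity requires Kato's refined Swan conductor apparatus, which you neither set up nor execute. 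As written, the central claim of the proof is assumed, not established. There is also a secondary error in the bookkeeping: bilinearity produces the extra symbol $[\lambda u_1u_{n+1},\lambda)$ with constant second slot, which you silently drop. Its differential representative is a nonzero multiple of $u_1u_{n+1}\,d\lambda$, and $d\lambda=0$ only when $\lambda$ is a $p$-th power in $k$; since the theorem concerns arbitrary (possibly non-perfect) fields of characteristic $p\neq 2$, an arbitrary $\lambda\in k^\times$ does not work. The repair is to take $\lambda\in\mathbb{F}_p^\times\setminus\{1\}$, which exists precisely because $p\neq 2$ --- this, rather than the reason you give, is where the hypothesis is genuinely consumed in your approach.

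The irony is that the nonvanishing you defer has a one-line proof available inside your own framework, and it is the paper's (Lemma~\ref{lem:An0}): the difference symbol $[(\lambda-1)u_1u_{n+1},u_1)$ is itself the generic-point class of an Azumaya algebra obtained from a Weyl algebra by rescaling central coordinates and tensoring with a polynomial ring; that algebra is a domain, a central simple algebra which is a domain is a division algebra, and so its class in $\mathrm{Br}(K)$ is nonzero. (Equivalently, restrict to the coordinate plane $\mathbb{A}^2_k\subset\mathbb{A}^{2n}_k$ and apply the domain argument to $A_1(k)$ there; no injectivity of $\mathrm{Br}(\mathbb{A}^{2n}_k)\to\mathrm{Br}(K)$ is needed anywhere --- functoriality of restriction already suffices for your contrapositive.) With these two repairs your argument closes up into a correct proof that is genuinely different in mechanism from the paper's: the paper obtains additivity of the classes $[\omega(c)_\ast A_n(k)]$ by the explicit change of generators $\zeta_i,\alpha_i$ (where it spends $\mathrm{char}\,k\neq 2$ on the factor $\frac{1}{2}$) and Proposition~\ref{prop:az}, whereas you get the same additivity formally from symbol calculus at the generic point. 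But both routes must terminate in the domain argument (or in serious wild-ramification theory), and your submission contains neither.
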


This is equivalent to the claim that the inclusion $\Aut(\mathbb{A}^{2n}_k)_{[A_n(k)]}\to \Aut(\mathbb{A}^{2n}_k)$ is not an equality.  Thus we focus on the stabilizer of the Brauer class.  We will make use of the following class of automorphisms.

\begin{definition}\label{def:az}
For any $c\in k^\times$, let $\bar{\omega}(c):Z_n(k)\to Z_n(k)$ be the automorphism given on the generators by
\[
\bar{\omega}(c)(x^p_i)=c^{-\omega_i}x^p_i\ \ \mathrm{where}\ \ \omega_i=\left\{\begin{array}{ll}
0 & \mathrm{if}\ i\leq n\\ 1 &\mathrm{if}\ i>n.
\end{array}\right.
\]
Let $\omega(c)$ denote the dual automorphism $\Spec(\bar{\omega}(c))$.
\end{definition}

In words, $\bar{\omega}(c)$ simply scales the last $n$ generators of $Z_n(k)$ by $c^{-1}$.  In the statement of the following proposition we take formally $\omega(0)_\ast A_n(k)=M_{p^n}(Z_n(k))$.  What we prove specifically is

\begin{proposition}\label{prop:az}
When $\mathrm{char}(k)\neq 2$ there is a group embedding
\[
\omega_\ast:k\to \mathrm{Br}(\mathbb{A}^{2n}),\ \ c\mapsto [\omega(c)_\ast A_n(k)],
\]
from the additive group of $k$, and corresponding embedding into the right coset space
\[
k^\times\to \Aut(\mathbb{A}^{2n}_k)/\Aut(\mathbb{A}^{2n}_k)_{[A_n(k)]},\ \  c\mapsto \omega(c)\cdot \Aut(\mathbb{A}^{2n}_k)_{[A_n(k)]}.
\]
\end{proposition}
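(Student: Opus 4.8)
The plan is to reduce the homomorphism property to a computation at the generic point of $\mathbb{A}^{2n}_k$, where the relevant Brauer classes become explicit Artin--Schreier symbols, and then to detect the dependence on $c$ through a differential invariant that is manifestly additive. Write $Z=Z_n(k)=k[x_1^p,\dots,x_{2n}^p]$ and $F=\mathrm{Frac}(Z)=k(u_1,\dots,u_n,v_1,\dots,v_n)$, where $u_j=x_j^p$ and $v_j=x_{n+j}^p$. Since $Z$ is a polynomial ring over a field it is regular, so by Auslander--Goldman the restriction map $\mathrm{Br}(\mathbb{A}^{2n}_k)\to \mathrm{Br}(F)$ is injective; hence it suffices to prove additivity $[\omega(c+c')_\ast A_n(k)]=[\omega(c)_\ast A_n(k)]+[\omega(c')_\ast A_n(k)]$ after restriction to $F$, together with $[\omega(0)_\ast A_n(k)]=1_{\mathrm{Br}}$. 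Injectivity of $\omega_\ast$ will then be immediate from Lemma~\ref{lem:An0}: for $c\neq 0$ the map $\omega(c)$ is an automorphism of $\mathbb{A}^{2n}_k$, so $[\omega(c)_\ast A_n(k)]\neq [Z_n(k)]=1_{\mathrm{Br}}$, whence the additive homomorphism $\omega_\ast$ has trivial kernel.

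For the symbol computation I would first reduce to $n=1$. The decomposition $A_n(k)=A_1(k)^{\ox n}$ into the commuting pairs $(x_j,x_{n+j})$ is compatible with $\omega(c)$, which scales each $x_{n+j}^p$ by $c^{-1}$ and fixes each $x_j^p$; thus $\omega(c)_\ast A_n(k)=\bigotimes_j \omega(c)_\ast A_1^{(j)}$ and $[\omega(c)_\ast A_n(k)]=\sum_j[\omega(c)_\ast A_1^{(j)}]$ in $\mathrm{Br}(F)$. For a single factor I set $\eta=x_{n+1}$ and $\xi=x_1x_{n+1}$; one checks $\eta\xi\eta^{-1}=\xi+1$ in $A_1(k)\ox_Z F$ (here $\eta$ is invertible as $\eta^p=v_1\neq 0$), together with $\eta^p=v_1$ and the standard $p$-th power identity $\xi^p-\xi=x_1^px_{n+1}^p=u_1v_1$ for the Euler operator. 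Since $\xi,\eta$ generate $A_1(k)\ox_Z F$, and both it and the Artin--Schreier symbol algebra have $F$-dimension $p^2$, this exhibits $A_1(k)\ox_Z F\cong [u_1v_1,v_1)$. For the pushforward the only change is that the distinguished central coordinate now reads $x_{n+1}^p=cv_1$ while $x_1^p=u_1$, so $\eta^p=cv_1$ and $\xi^p-\xi=cu_1v_1$, giving $\omega(c)_\ast A_1(k)\ox_Z F\cong[cu_1v_1,cv_1)$ and therefore $[\omega(c)_\ast A_n(k)]=\sum_j[cu_jv_j,cv_j)$ in $\mathrm{Br}(F)$.

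To read off the $c$-dependence I would apply the differential ($d\!\log$) symbol $[a,b)\mapsto a\,\tfrac{db}{b}$, which by Bloch--Kato--Gabber/Kato identifies the $p$-torsion of $\mathrm{Br}(F)$ with a group of differential forms and is in particular an injective homomorphism. Under it $[cu_jv_j,cv_j)\mapsto cu_jv_j\,\tfrac{d(cv_j)}{cv_j}$; since $c$ is a scalar and $d$ is additive, this form is additive in $c$ (both the term $c\,u_j\,dv_j$ and any $u_jv_j\,dc$ term are additive) and vanishes at $c=0$. Summing over $j$, the class $[\omega(c)_\ast A_n(k)]$ maps to a form additive in $c$; by injectivity of the differential symbol, $\omega_\ast$ is a homomorphism from $(k,+)$ to the $p$-torsion of $\mathrm{Br}(F)$, hence to $\mathrm{Br}(\mathbb{A}^{2n}_k)$ by injectivity of restriction. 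Combined with the kernel computation above, $\omega_\ast$ is the asserted group embedding.

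The second embedding follows formally. Because $\bar\omega(c)$ merely scales the last $n$ generators by $c^{-1}$, the assignment $c\mapsto\omega(c)$ is a homomorphism $k^\times\to\Aut(\mathbb{A}^{2n}_k)$, so $\omega(c)^{-1}\omega(c')=\omega(c^{-1}c')$. This element lies in $\Aut(\mathbb{A}^{2n}_k)_{[A_n(k)]}$ exactly when $[\omega(c^{-1}c')_\ast A_n(k)]=[A_n(k)]=\omega_\ast(1)$, i.e.\ when $\omega_\ast(c^{-1}c')=\omega_\ast(1)$, which by injectivity of $\omega_\ast$ forces $c=c'$; hence $c\mapsto \omega(c)\cdot\Aut(\mathbb{A}^{2n}_k)_{[A_n(k)]}$ is injective on $k^\times$. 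I expect the main obstacle to be the middle two paragraphs: pinning down the Brauer class of the Weyl algebra as the symbol $[u_jv_j,v_j)$ (the $p$-th power identity, and the verification that this is genuinely the class rather than merely one summand) and invoking the differential symbol in a form valid here. This is also where I expect the standing hypothesis $\mathrm{char}(k)\neq 2$ to enter, the prime $2$ requiring separate care in the differential-symbol description of the $p$-torsion of $\mathrm{Br}(F)$.
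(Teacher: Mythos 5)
Your proof is correct, but it follows a genuinely different route from the paper's. The paper gets additivity of $\omega_\ast$ from explicit $Z_n(k)$-algebra isomorphisms $(\omega(c)_\ast A_n(k))\ox_{Z_n(k)}(\omega(c')_\ast A_n(k))\cong(\omega(c+c')_\ast A_n(k))\ox_{Z_n(k)}M_{p^n}(Z_n(k))$ (Lemmas~\ref{lem:An2} and~\ref{lem:An3}), constructed from sum-and-difference generators $\zeta_i,\alpha_i$, with the degenerate case $c+c'=0$ handled by $\omega(-1)_\ast A_n(k)\cong A_n(k)^{op}$; this explicit change of variables is precisely where $\mathrm{char}(k)\neq 2$ enters (one divides by $2$). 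You instead pass to the generic point --- legitimate, since $Z_n(k)$ is regular, so $\mathrm{Br}(\mathbb{A}^{2n}_k)\to\mathrm{Br}(F)$ is injective by Auslander--Goldman --- identify the generic fibre as the sum of Artin--Schreier symbols $\sum_j[cu_jv_j,cv_j)$ (your Euler-operator computation $\xi^p-\xi=u_1v_1$, $\eta\xi\eta^{-1}=\xi+1$, plus the dimension count, is correct), and detect additivity in $c$ via Kato's differential description of ${}_p\mathrm{Br}(F)$; injectivity and the coset statement are then handled exactly as in the paper, via Lemma~\ref{lem:An0} (which you cite with the correct condition $c\neq 0$; the paper's own proof has a typo reading $c\neq 1$). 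The trade-offs: the paper's argument is elementary and self-contained, and produces honest isomorphisms over all of $\mathbb{A}^{2n}_k$ rather than only generically, while yours invokes two substantial external theorems but in exchange exhibits the Brauer class of $\omega(c)_\ast A_n(k)$ as an explicit symbol (of independent interest) and --- contrary to your closing guess --- never actually uses $\mathrm{char}(k)\neq 2$: Kato's description of ${}_p\mathrm{Br}$ is valid at $p=2$ (the extra delicacy at $2$ concerns quadratic forms, not the Brauer group), so your argument in fact proves the proposition in characteristic $2$ as well. You could also make your route as elementary as the paper's by replacing Kato's theorem with the standard symbol identities $[a+a',b)=[a,b)+[a',b)$, $[a,bb')=[a,b)+[a,b')$ and $[a,a)=0$ (the latter because $a=N_{F(\wp^{-1}(a))/F}(\theta)$ is a norm), which yield $[cuv,cv)+[c'uv,c'v)=[(c+c')uv,(c+c')v)$ directly, with no appeal to differential forms.
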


Since $\omega_\ast$ is an embedding we find in particular that, when $c\neq 1$, we have $[\omega(c)_\ast A_n(k)]\neq [A_n(k)]$.  So none of the $\omega(c)$ stabilize the Brauer class of $A_n(k)$, except $\omega(1)=id_{Z_n(k)}$.  Note that the order of $\omega(c)$ ranges with $k$ and $c$, and can be made arbitrarily large (even infinite) with large $k$.
\par

From Proposition~\ref{prop:az} we will have the obvious

\begin{proof}[Proof of Theorem~\ref{thm:az}]
Assuming the conclusion of Proposition~\ref{prop:az}, the embedding $\DPic(A_n(k))\to \DPic(Z_n(k))$ will not be surjective since none of the elements
\[
(m,\omega(c))\in \mathbb{Z}\times \Aut(\mathbb{A}^{2n}_k)=\DPic(Z_n(k))
\]
will be in its image.
\end{proof}

We will also get from Proposition~\ref{prop:az} the following corollary.

\begin{corollary}\label{cor:az}
When $k$ is an infinite field the stabilizer $\Aut(\mathbb{A}^{2n}_k)_{[A_n(k)]}$ is of infinite index in $\Aut(\mathbb{A}^{2n}_k)$.
\end{corollary}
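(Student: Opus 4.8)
The plan is to deduce this directly from the second embedding supplied by Proposition~\ref{prop:az}, which already contains essentially all of the work. Recall that, by definition, the index of the subgroup $\Aut(\mathbb{A}^{2n}_k)_{[A_n(k)]}$ in $\Aut(\mathbb{A}^{2n}_k)$ is precisely the cardinality of the coset space
\[
\Aut(\mathbb{A}^{2n}_k)/\Aut(\mathbb{A}^{2n}_k)_{[A_n(k)]}.
\]
So it suffices to show that this coset space is infinite.

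First I would invoke the injection
\[
k^\times\to \Aut(\mathbb{A}^{2n}_k)/\Aut(\mathbb{A}^{2n}_k)_{[A_n(k)]},\ \  c\mapsto \omega(c)\cdot \Aut(\mathbb{A}^{2n}_k)_{[A_n(k)]}
\]
furnished by Proposition~\ref{prop:az}. Since this is an injection of sets, the cardinality of the coset space is bounded below by the cardinality of $k^\times$, and hence the index in question is at least $|k^\times|$.

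Finally, when $k$ is an infinite field the multiplicative group $k^\times=k\setminus\{0\}$ is infinite, so the displayed injection forces the coset space, and therefore the index, to be infinite. (We remain under the standing hypothesis $\mathrm{char}(k)\neq 2$ inherited from Proposition~\ref{prop:az}.)

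I do not anticipate any genuine obstacle here: the entire content of the corollary is packaged into Proposition~\ref{prop:az}, and what remains is the elementary observation that an injection into the coset space forces the index to be at least $|k^\times|$, together with the fact that $k^\times$ is infinite for infinite $k$. The only point worth stating carefully is that $\Aut(\mathbb{A}^{2n}_k)_{[A_n(k)]}$ need not be normal, so the coset space is merely a set rather than a group; but this is immaterial for the index count, which uses only cardinality.
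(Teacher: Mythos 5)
Your proposal is correct and follows exactly the paper's own argument: the index is the cardinality of the coset space $\Aut(\mathbb{A}^{2n}_k)/\Aut(\mathbb{A}^{2n}_k)_{[A_n(k)]}$, and the injection $k^\times\to \Aut(\mathbb{A}^{2n}_k)/\Aut(\mathbb{A}^{2n}_k)_{[A_n(k)]}$ from the final claim of Proposition~\ref{prop:az} makes this infinite whenever $k^\times$ is. Your additional remarks (non-normality being immaterial, the inherited hypothesis $\mathrm{char}(k)\neq 2$) are accurate but not needed beyond what the paper records.
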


\begin{proof}
The index is the cardinality of the quotient $\Aut(\mathbb{A}^{2n}_k)/\Aut(\mathbb{A}^{2n}_k)_{[A_n(k)]}$.  So the result follows from the final claim of Proposition~\ref{prop:az}.
\end{proof}

Although we've only considered a relatively small class of automorphisms here, we expect that there are in fact many automorphisms which do not stabilize the Brauer class of $A_n(k)$.  We expect, for example, that Corollary~\ref{cor:az} will hold even when $k$ is not infinite.  We are also curious about the algebraic nature of the stabilizer.
\begin{question}
If we identify $\mathrm{GL}_{2n}(k)$ with the subgroup of linear automorphisms on $\mathbb{A}^{2n}_k$, is the intersection $\Aut(\mathbb{A}^{2n}_k)_{[A_n(k)]}\cap \mathrm{GL}_{2n}(k)$ a locally closed subset in $\mathrm{GL}_{2n}(k)$?
\end{question}

To prove Proposition~\ref{prop:az} we will need a number of lemmas.  It will also be helpful to have a bit more information about the Brauer group.

\subsection{The Brauer group is a group}

Take $X=\Spec(R)$.  The Brauer group $\mathrm{Br}(X)$ of $X$ becomes a group under the tensor product of Azumaya algebras, $[A]\cdot[B]:=[A\ox_R B]$.  The inverse is given by the opposite algebra.  Indeed, we have by the Azumaya property
\[
[A]\cdot [A^{op}]=[A\ox_R A^{op}]=[\End_R(A)]=[R].
\]
One can see~\cite{milne80}, or any number of other references, for more details.

\subsection{Left action by the Brauer class of $A_n(k)$}

We fix $\mathrm{char}(k)=p\neq 2$.  The following lemma will be indicative of a general occurrence.  We present the specific case first since we believe the details are easier to grasp.  One should recall the definition of the automorphisms $\omega(c)$ from Definition~\ref{def:az}.

\begin{lemma}\label{lem:An2}
There is a $Z_n(k)$-algebra isomorphism
\[
A_n(k)\ox_{Z_n(k)}A_n(k)\cong (\omega(2)_\ast A_n(k))\ox_{Z_n(k)}M_{p^n}(Z_n(k)).
\]
\end{lemma}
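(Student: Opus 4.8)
The plan is to exhibit both sides as Azumaya algebras over $Z_n(k)$ of the same rank $p^{4n}$ and to produce an explicit $Z_n(k)$-algebra isomorphism via a linear change of generators. I would write $A_n(k)\ox_{Z_n(k)}A_n(k)$ using generators $x_1,\dots,x_{2n}$ from the left factor and $y_1,\dots,y_{2n}$ from the right factor; these satisfy the Weyl relations within each family, commute across the two families, and---crucially, since the tensor product is over the center---satisfy $x_i^p=y_i^p$. Because $\mathrm{char}(k)\neq 2$, I would pass to the new generators $s_i=x_i+y_i$ and $t_i=x_i-y_i$, which still generate since the change of basis $x_i=\frac{1}{2}(s_i+t_i),\ y_i=\frac{1}{2}(s_i-t_i)$ is invertible.

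First I would record the commutators of the $s_i$ and $t_i$. A direct computation gives $[s_{j+n},s_j]=[t_{j+n},t_j]=2$, with all other brackets inside a single family vanishing, while every bracket mixing an $s$ with a $t$ vanishes. Hence the subalgebras $S=\langle s_i\rangle$ and $T=\langle t_i\rangle$ commute elementwise, and each is a Weyl algebra with commutator rescaled to $2$. Next I would compute $p$-th powers using that $x_i$ and $y_i$ commute: the identity $(a\pm b)^p=a^p\pm b^p$ for commuting elements gives $s_i^p=x_i^p+y_i^p=2x_i^p$ and $t_i^p=x_i^p-y_i^p=0$, where the vanishing is precisely the identification $x_i^p=y_i^p$ forced by $\ox_{Z_n(k)}$.

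The two factors are then identified separately. Since each $t_i$ is nilpotent with $t_i^p=0$, the algebra $T$ is finite dimensional over $k$ of dimension $p^{2n}$; after rescaling the momenta by $\frac{1}{2}$ it becomes the truncated Weyl algebra, which I would split by exhibiting its faithful action on $k[t_1,\dots,t_n]/(t_1^p,\dots,t_n^p)$, yielding $T\cong M_{p^n}(k)$. For $S$, the relations $s_i^p=2x_i^p$ show $Z_n(k)=k[x_i^p]=k[s_i^p]$ sits inside $S$, and the explicit rescaling $s_j\mapsto 2w_j$ (positions, $j\leq n$) and $s_{j+n}\mapsto w_{j+n}$ (momenta) defines a $k$-algebra isomorphism from $S$ onto a standard Weyl algebra $A_n(k)=k\langle w_i\rangle$. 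Tracking where $x_j^p$ and $x_{j+n}^p$ land (using $2^p=2$ in $k$) shows this isomorphism is $Z_n(k)$-linear exactly for the pushforward structure $\bar\omega(2)$, i.e. $S\cong \omega(2)_\ast A_n(k)$ as $Z_n(k)$-algebras.

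Finally I would assemble the pieces. Multiplication gives a $Z_n(k)$-algebra map $S\ox_k T\to A_n(k)\ox_{Z_n(k)}A_n(k)$, surjective because the $s_i,t_i$ generate; as a surjection of finitely generated projective $Z_n(k)$-modules of equal rank $p^{4n}$ over the domain $Z_n(k)$, it is an isomorphism. Rewriting $S\ox_k T\cong S\ox_{Z_n(k)}\big(M_{p^n}(k)\ox_k Z_n(k)\big)=\big(\omega(2)_\ast A_n(k)\big)\ox_{Z_n(k)}M_{p^n}(Z_n(k))$ gives the claim. The main obstacle is bookkeeping rather than conceptual: one must follow the center through the tensor product carefully enough to see both that $t_i^p=0$ (which forces $T$ to be a \emph{split} matrix algebra) and that the scaling surviving on $S$ is exactly $\omega(2)$ and not some other $\omega(c)$; the hypothesis $\mathrm{char}(k)\neq 2$ enters in making $s_i,t_i$ a legitimate coordinate change.
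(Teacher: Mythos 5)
Your proposal is correct and follows essentially the same route as the paper: split the generators into the two commuting families $x_i\pm y_i$ (up to scaling), use the characteristic-$p$ computation of $p$-th powers to get $2z_i$ for the sums and $0$ for the differences, identify the nilpotent family with $M_{p^n}(k)$ via differential operators on truncated polynomials, identify the other family with $\omega(2)_\ast A_n(k)$, and conclude by a surjection between vector bundles of equal rank $p^{4n}$ over $Z_n(k)$. The one place the paper is more careful is in the direction of the intermediate maps: it only constructs surjections from the presented algebras $A_n(k)$ and $M_{p^n}(Z_n(k))$ \emph{onto} the subalgebras $k\langle\zeta_i\rangle$ and $Z_n(k)\langle\alpha_i\rangle$ (your claimed isomorphism from $S$ onto a standard Weyl algebra presumes an injectivity you never verify, and a map defined on the generators of a concrete subalgebra is not automatically well defined), and then lets the final equal-rank argument deliver all the isomorphisms simultaneously.
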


\begin{proof}
Let $x_i$ and $y_i$ denote the generators of the two copies of $A_n(k)$ in the product $A_n(k)\ox_{Z_n(k)}A_n(k)$, and take $z_i=x_i^p=y_i^p$.  Then $A_n(k)\ox_{Z_n(k)}A_n(k)$ is generated by the elements $\{x_i\}_{i=1}^{2n}\cup\{y_i\}_{i=1}^{2n}$, or alternatively by the elements
\[
\mathbb{B}=\left\{\frac{1}{2^{\epsilon_i}}(x_i+y_i),\frac{1}{2^{\epsilon_i}}(x_i-y_i)\right\}_{i=1}^{n},
\]
where $\epsilon_i=1$ if $i\leq n$ and $0$ if $i>n$.
We take
\[
\zeta_i=\frac{1}{2^{\epsilon_i}}(x_i+y_i)\ \ \mathrm{and}\ \ \alpha_i=\frac{1}{2^{\epsilon_i}}(x_i-y_i)
\]
so that $\mathbb{B}=\{\zeta_i,\alpha_i\}_i$.
\par

We have, for these alternate generators, the relations
\begin{enumerate}
\item[(a)] $[\zeta_i,\zeta_j]=\frac{1}{2}([x_i,x_j]+[y_i,y_j])=\delta_{i,j+n}-\delta_{i+n,j}$
\item[(b)] ${[\zeta_i,\alpha_j]}=\frac{1}{2}([x_i,x_j]-[y_i,y_j])=0$
\item[(c)] ${[\alpha_i,\alpha_j]}=\frac{1}{2}([x_i,x_j]+[y_i,y_j])=\delta_{i,j+n}-\delta_{i+n,j}$
\item[(d)] $\zeta_i^p=\frac{1}{2^{p\epsilon_i}}(x_i^p+y_i^p)=\frac{2}{2^{p\epsilon_i}}z_i=2^{\omega_i}z_i$
\item[(e)] $\alpha_i^p=\frac{1}{2^{p\epsilon_i}}(x_i^p-y_i^p)=0$.
\end{enumerate}
Here $\omega_i=0$ if $i\leq n$ and $1$ if $i>n$, and for the final two relations we use the fact that $x_i$ and $y_i$ commute so that we may use the binomial expansion of the $p$th power.  We've also used the fact that $2^p=2\in \mathbb{F}_p$, since $|\mathbb{F}_p^\times|=(p-1)$, to get relation (d).
\par

As is shown in \cite[Lemma 2]{belovkontsevich07}, the relation (c) and (e) imply that for each $i\leq n$ the subalgebra
\[
k\langle \alpha_i,\alpha_{i+n}\rangle\subset A_n(k)\ox_{Z_n(k)}A_n(k)
\]
is isomorphic to the ring $M_{p}(k)$.  (These subalgebras act on $k[t]/(t^p)$ by the obvious differential operators and the corresponding faithful representation $k\langle \alpha_i,\alpha_{i+n}\rangle\to \End_k(k[t]/(t^p))$ is an algebra isomorphism.)  Since we have a surjective algebra map
\[
k\langle\alpha_1,\alpha_{n+1}\rangle\ox\dots\ox k\langle\alpha_n,\alpha_{2n}\rangle\to k\langle \alpha_1,\dots, \alpha_{2n}\rangle,
\]
by relation (c), we then have
\[
M_{p^n}(k)\cong M_p(k)^{\ox n}\cong k\langle \alpha_1,\dots, \alpha_{2n}\rangle\subset A_n(k)\ox_{Z_n(k)}A_n(k)
\]
via simplicity of the matrix ring.  Changing base to $Z_n(k)$ on the domain then gives a surjection of $Z_n(k)$-algebras $M_{p^n}(Z_n(k))\to Z_n(k)\langle \alpha_1,\dots, \alpha_{2n}\rangle$.  
\par

As for the subalgebra generated by the $\zeta_i$, via the relations (a) and (d) we see that we also get a surjective algebra map $A_n(k)\to k\langle \zeta_1,\dots,\zeta_{2n}\rangle$.  This map becomes $Z_n(k)$-linear after we pushforward by the automorphism $\omega(2)\in\Aut(\mathbb{A}^{2n}_k)$ on the domain.
\par

By way of the two maps constructed above, and the relations (b), we get a surjective $Z_n(k)$-algebra map
\begin{equation}\label{eq:685}
(\omega(2)_\ast A_n(k))\ox_{Z_n(k)}M_{p^n}(Z_n(k))\to Z_n\langle \zeta_i,\alpha_i\rangle_i=A_n(k)\ox_{Z_n(k)}A_n(k).
\end{equation}
Note that both of these algebras are vector bundles of rank $p^{4n}$ over $Z_n(k)$.  Since any surjective $Z_n(k)$-linear map of vector bundles of the same rank will be an isomorphism, the map (\ref{eq:685}) is in fact an isomorphism.
\end{proof}

\begin{lemma}\label{lem:An3}
For any $c,c'\in k$ we have equalities
\begin{equation}\label{eq:701}
[\omega(c)_\ast A_n(k)]\cdot [\omega(c')_\ast A_n(k)]= [\omega(c+c')_\ast A_n(k)]
\end{equation}
in the Brauer group, where we formally take $\omega(0)_\ast A_n(k)=M_{p^n}(Z_n(k))$.  Furthermore, the class $[A_n(k)]$ is order $p$ in the Brauer group.
\end{lemma}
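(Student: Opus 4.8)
The plan is to deduce both assertions from the multiplicativity formula (\ref{eq:701}), treating the order statement as an easy consequence. Granting (\ref{eq:701}), write $A_n(k)=\omega(1)_\ast A_n(k)$ and iterate with $c'=1$ to get $[A_n(k)]^m=[\omega(m\cdot 1_k)_\ast A_n(k)]$ for all $m\ge 1$. Taking $m=p$ and using $p\cdot 1_k=0$ in characteristic $p$ gives $[A_n(k)]^p=[\omega(0)_\ast A_n(k)]=[M_{p^n}(Z_n(k))]=1_{\mathrm{Br}}$; since $[A_n(k)]\ne 1_{\mathrm{Br}}$ by Lemma~\ref{lem:An0} and $p$ is prime, $[A_n(k)]$ has order exactly $p$. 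So the whole problem is (\ref{eq:701}).

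To prove (\ref{eq:701}) I would run the computation of Lemma~\ref{lem:An2} with the parameters carried along. Let $x_i$ and $y_i$ be the generators of the two tensor factors of $(\omega(c)_\ast A_n(k))\ox_{Z_n(k)}(\omega(c')_\ast A_n(k))$ and let $z_i$ be the $i$th generator of the (identified) center $Z_n(k)$, so that $[x_i,y_j]=0$, $x_i^p=c^{\omega_i}z_i$ and $y_i^p=c'^{\omega_i}z_i$. Applying the same substitution $\zeta_i=\tfrac{1}{2^{\epsilon_i}}(x_i+y_i)$, $\alpha_i=\tfrac{1}{2^{\epsilon_i}}(x_i-y_i)$, the commutator relations (a)--(c) of Lemma~\ref{lem:An2} are unchanged, while the $p$th powers become
\[
\zeta_i^p=z_i\ (i\le n),\quad \zeta_{i+n}^p=(c+c')z_{i+n},\qquad \alpha_i^p=0\ (i\le n),\quad \alpha_{i+n}^p=(c-c')z_{i+n}.
\]
As in Lemma~\ref{lem:An2}, the $Z_n(k)$-subalgebra on the $\zeta_i$ is a copy of $\omega(c+c')_\ast A_n(k)$ (the map $x_i\mapsto\zeta_i$ becomes $Z_n(k)$-linear after pushing the source forward along $\omega(c+c')$); the degenerate case $c+c'=0$ is consistent with the convention $\omega(0)_\ast A_n(k)=M_{p^n}(Z_n(k))$.

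The one new point, which I expect to be the crux, concerns the $\alpha$-subalgebra. When $c=c'=1$ one had $\alpha_{i+n}^p=0$ too, so this subalgebra was ``doubly nilpotent'' and patently a matrix algebra. For $c\ne c'$ the momentum generators satisfy $\alpha_{i+n}^p=(c-c')z_{i+n}\ne 0$, and over a non-perfect field this cannot be removed by rescaling: there is in general no $p$th root in $k$ of the scalar needed to make both generators of a conjugate pair simultaneously nilpotent. Thus one is forced to show that the \emph{half}-nilpotent algebra
\[
A_{0,g}:=Z_n(k)\langle a,b\mid [a,b]=-1,\ a^p=0,\ b^p=g\rangle,\qquad g=(c-c')z_{i+n},
\]
is still a matrix algebra. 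This I would establish in the style of Lemma~\ref{lem:An2}: sending $b$ to multiplication by $t$ and $a$ to $-\,d/dt$ gives a representation of $A_{0,g}$ on the free rank-$p$ module $Z_n(k)[t]/(t^p-g)$, hence a $Z_n(k)$-algebra map $A_{0,g}\to \End_{Z_n(k)}\!\big(Z_n(k)[t]/(t^p-g)\big)=M_p(Z_n(k))$. This map is surjective---checked on each fibre, where the module is simple and the fibre algebra, having a nonzero nilpotent, is split---and is therefore an isomorphism of vector bundles of equal rank $p^2$.

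Granting the splitting, the $\alpha$-subalgebra is a $Z_n(k)$-tensor product of $n$ copies of $M_p(Z_n(k))$, i.e. $M_{p^n}(Z_n(k))$, and the equal-rank surjection argument promotes the mutual generation of the $\zeta$- and $\alpha$-subalgebras to an isomorphism
\[
(\omega(c)_\ast A_n(k))\ox_{Z_n(k)}(\omega(c')_\ast A_n(k))\cong(\omega(c+c')_\ast A_n(k))\ox_{Z_n(k)}M_{p^n}(Z_n(k)).
\]
Passing to Brauer classes, where the matrix factor is trivial, yields (\ref{eq:701}), and with it the order-$p$ statement above. Everything except the splitting of $A_{0,g}$ is a parametrized rerun of Lemma~\ref{lem:An2}; the splitting is the only place where genuinely new input---a representation on the nontrivially twisted module $Z_n(k)[t]/(t^p-g)$ rather than on $k[t]/(t^p)$---is required.
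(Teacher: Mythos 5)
Your proposal is correct, and at its crux it takes a genuinely different route from the paper. The paper reduces everything to (\ref{eq:701}) and runs the same two-commuting-subalgebras, equal-rank-surjection scheme as Lemma~\ref{lem:An2}, but for $c+c'\neq 0$ it replaces your symmetric substitution by the weighted generators
\[
\zeta_i=\tfrac{1}{(c+c')^{\epsilon_i}}\left(c^{\epsilon_i}x_i+{c'}^{\epsilon_i}y_i\right),
\qquad
\alpha_i=\tfrac{1}{(c+c')^{\epsilon_i}}\left({c'}^{\omega_i}x_i-c^{\omega_i}y_i\right),
\]
claims that relations (a)--(e) of Lemma~\ref{lem:An2} then hold verbatim (so the $\alpha$-subalgebra is again doubly nilpotent and no new splitting input is needed), and disposes of $c+c'=0$ separately via $\omega(-1)_\ast A_n(k)\cong A_n(k)^{op}$ and the Azumaya property. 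Your route keeps the unweighted generators for all $c,c'$ (handling $c+c'=0$ uniformly, with no division by $c+c'$) and pays with the half-nilpotent splitting lemma for $A_{0,g}$. This buys more than elegance: the paper's weighted generators do \emph{not} actually satisfy relation (e) in the momentum directions, since for $i>n$
\[
\alpha_i^p=\left({c'}x_i-c y_i\right)^p={c'}^p x_i^p-c^p y_i^p={c'}^p(c z_i)-c^p(c' z_i)=c c'\left({c'}^{p-1}-c^{p-1}\right)z_i,
\]
which vanishes only when $cc'=0$ or $c'/c\in\mathbb{F}_p^\times$; and, as you observe, no choice of scalars repairs this unless $c'/c\in (k^\times)^p$. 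So over a non-perfect field the paper's argument establishes (\ref{eq:701}) only when $c'/c\in\mathbb{F}_p$ --- enough for the order-$p$ claim, but not for the full additivity on $k$ that Proposition~\ref{prop:az} requires --- and your splitting lemma (the representation on the twisted module $Z_n(k)[t]/(t^p-g)$) is exactly the ingredient needed to cover the general case.

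Two small points to tighten in your write-up. First, in the fibrewise surjectivity argument, simplicity of the module $\kappa[t]/(t^p-\bar{g})$ is not by itself enough: you must also check that its commutant is $\kappa$ (an endomorphism commutes with multiplication by $t$, hence is multiplication by some $h$, and commuting with $d/dt$ forces $dh/dt=0$, so $h\in\kappa$); Jacobson density then gives surjectivity onto $\End_{\kappa}\!\left(\kappa[t]/(t^p-\bar{g})\right)$, and the equal-rank argument finishes. Second, drop the clause that the fibre algebra, ``having a nonzero nilpotent, is split'': as a justification it is circular, since centrality and simplicity of the fibre algebra are not known until after the isomorphism with $M_p$ is established, whereas the density argument needs no such input.
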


\begin{proof}
Suppose first that $c+c'\neq 0\in k$.  We let $\{z_i\}_{i=1}^{2n}$ be the standard generators for the algebra of functions on $\mathbb{A}^{2n}_k$.  Each generator $z_i$ is now identified with $c^{-\omega_i}x_i^p$ in $\omega(c)_\ast A_n(k)$, as opposed to $x_i^p$.
\par

One finds, as in Lemma~\ref{lem:An2}, that we have an isomorphism
\[
(\omega(c)_\ast A_n(k))\ox_{Z_n(k)}(\omega(c')_\ast A_n(k))\cong(\omega(c+c')_\ast A_n(k))\ox_{Z_n(k)}M_{p^n}(Z_n(k)),
\]
which will imply (\ref{eq:701}).  Specifically, we use the alternate generators
\[
\zeta_i=\frac{1}{(c+c')^{\epsilon_i}}(c^{\epsilon_i}x_i+ {c'}^{\epsilon_i}y_i)\ \ \mathrm{and}\ \ \alpha_i=\frac{1}{(c+c')^{\epsilon_i}}({c'}^{\omega_i} x_i-c^{\omega_i}y_i)
\]
and follow exactly the proof of Lemma~\ref{lem:An2}.  The most important relation to check here is
\[
\zeta_i^p=\frac{c+c'}{(c+c')^{\epsilon_i}}z_i=(c+c')^{\omega_i}z_i.
\]
When $c+c'=0\Leftrightarrow c'/c=-1$ we note that $\omega(-1)_\ast A_n(k)\cong A_n(k)^{op}$ over $Z_n(k)$ via the map $x_i\mapsto (-1)^{\omega_i}x_i$, where $\omega_i$ is as in Definition~\ref{def:az}.  So 
\[
\begin{array}{rl}
[\omega(c)_\ast A_n(k)]\cdot [\omega(c')_\ast A_n(k)]&=[\omega(c)_\ast A_n(k)\ox_{Z_n(k)}\omega(c')_\ast A_n(k)]\\
&=[\omega(c)_\ast \left(A_n(k)\ox_{Z_n(k)}\omega(c'/c)_\ast A_n(k)\right)]\\
&=[\omega(c)_\ast \left(A_n(k)\ox_{Z_n(k)}\omega(-1)_\ast A_n(k)\right)]\\
&=[\omega(c)_\ast M_{p^{2n}}(Z_n(k))]\\
&=[Z_n(k)],
\end{array}
\]
as proposed.
\par

As for the order of $A_n(k)$, we have by the above argument
\[
[A_n(k)]^l=[\omega(\bar{l})_\ast A_n(k)]
\]
for each integer $l$, where $\bar{l}$ denotes its class in $\mathbb{F}_p$.  Since, by Lemma~\ref{lem:An0} $[\omega(c)_\ast A_n(k)]\neq [Z_n(k)]$ for each $c\in\mathbb{F}_p^\times$, we conclude $\mathrm{ord}(A_n(k))=p$.
\end{proof}

\subsection{The proof of Propositions~\ref{prop:az}}

\begin{proof}[Proof of Proposition~\ref{prop:az}]
Lemma~\ref{lem:An3} already tells us that we have a group map 
\begin{equation}\label{eq:745}
\omega_\ast:k\to \mathrm{Br}(\mathbb{A}_k^{2n}),\ \ c\mapsto [\omega(c)_\ast A_n(k)],
\end{equation}
from the additive group for $k$, as claimed.  The fact that $\omega_\ast$ is an embedding follows from the fact that the class of $\omega(c)_\ast A_n(k)$ is nontrivial whenever $c\neq 1$, by Lemma~\ref{lem:An0}.  Since we have the canonical bijection of sets
\[
\begin{array}{c}
\Aut(\mathbb{A}^{2n}_k)/\Aut(\mathbb{A}^{2n}_k)_{[A_n(k)]}\to \Aut(\mathbb{A}^{2n}_k)\cdot [A_n(k)]\\
\phi\cdot \Aut(\mathbb{A}^{2n}_k)_{[A_n(k)]}\mapsto \phi\cdot [A_n(k)]
\end{array}
\]
it follows that the map
\[
k^\times\to \Aut(\mathbb{A}^{2n}_k)/\Aut(\mathbb{A}^{2n}_k)_{[A_n(k)]},\ \ c\mapsto \omega(c)\cdot \Aut(\mathbb{A}^{2n}_k)_{[A_n(k)]}
\]
corresponding to the embedding (\ref{eq:745}) is also an embedding.
\end{proof}

\section*{Acknowledgements}

Thanks to Amnon Yekutieli for pointing out a number of helpful references.  Thanks also to Ben Antieau for a number of thoughtful comments and to James Zhang, whose inquiries are responsible for the materials of Section~\ref{sect:Weyl}.  Thanks to the referee for many helpful comments.

\bibliographystyle{abbrv}

\def\cprime{$'$}

\end{document}